\documentclass[a4paper,11pt]{article}

\usepackage{
amsmath,
amsthm,
amssymb,
amsfonts,
enumerate,
verbatim,
graphicx,
mathtools,
color,
url,
listings,
hyperref
}


\addtolength{\hoffset}{-1.7cm}
\addtolength{\textwidth}{2.8cm}
\addtolength{\voffset}{-1.5cm}
\addtolength{\textheight}{3.5cm}
\numberwithin{equation}{section}


\def\bbb{\boldsymbol}
\def\mmm{\mathcal}
\def\fff{\mathfrak}
\def\ttt{\tilde}
\def\R{\mathbb{R}}
\def\mbR{\mathbb{R}}

\def\N{\mathbb{N}} 
\def\eps{\varepsilon}
\def\surf{{\Gamma^{0}}}
\def\surfh{{\Gamma^{0}_{h}}}
\def\solu{\bbb{u}}
\def\solw{\bbb{w}}
\def\soluh{\bbb{U}_{h}}
\def\solwh{\bbb{W}_{h}}
\def\soluhl{\bbb{u}_{h}}
\def\solwhl{\bbb{w}_{h}}
\def\proju{\bbb{\rho}^{(\solu)}}
\def\projw{\bbb{\rho}^{(\solw)}}
\def\erru{\bbb{\theta}^{(\solu)}}
\def\errw{\bbb{\theta}^{(\solw)}}
\def\snab{\nabla_{\Gamma^{0}}}
\def\snabh{\nabla_{\Gamma^{0}_{h}}}
\def\unitnormal{\bbb{\nu}_{\surf}}
\def\unitnormalh{\bbb{\nu}_{\surfh}}
\def\tfu{\bbb{\phi}}
\def\tfw{\bbb{\eta}}
\def\tfuh{\bbb{\Phi}_{h}}
\def\tfwh{\bbb{H}_{h}}
\def\tfuhl{\bbb{\phi}_{h}}
\def\tfwhl{\bbb{\eta}_{h}}

\def\pd{\partial}

\newtheorem{theorem}{Theorem}[section]
\newtheorem{problem}[theorem]{Problem}
\newtheorem{lemma}[theorem]{Lemma}
\newtheorem{corollary}[theorem]{Corollary}
\DeclareMathOperator*{\esssup}{ess\,sup}
\DeclareMathOperator*{\trace}{trace}
\DeclareMathOperator*{\diam}{diam}
\DeclareMathOperator*{\sign}{sign}


\title{A surface finite element method for computational modelling of cell blebbing}

\author{
Bj\"orn Stinner 
\and 
Andreas Dedner 
\and 
Adam Nixon
\thanks{
Corresponding author: Bj\"orn Stinner, 
Mathematics Institute, 
Zeeman Building, 
University of Warwick, 
Coventry CV4 7AL, 
United Kingdom 
\url{bjorn.stinner@warwick.ac.uk}
}
}

\begin{document}
\lstset{language=Python, basicstyle=\small}

\maketitle

\begin{abstract}

Cell blebs are protrusions of the cell membrane and can be instrumental for cell migration. We derive a continuum model for the mechanical and geometrical aspects of the onset of blebbing in terms of a force balance. It is abstract and flexible in that it allows for amending force contributions related to membrane tension or the presence of linker molecules between membrane and cell cortex. The deforming membrane and all forces are expressed by means of a parametrisation over a stationary reference surface. A variational formulation is presented and analysed for well-posedness. For this purpose, we derive a semi-discrete scheme based on the surface finite element method. We provide a convergence result and estimates of the error due to the spatial discretisation. Furthermore, we present a computational framework where specific models can be implemented and later on conveniently amended if desired, using a domain specific language implemented in Python. While the high level program control can be done within the Python scripting environment, the actual computationally expensive step of evolving the solution over time is carried out by binding to an efficient software backend. Cell membrane geometries given in terms of a parametrisation or obtained from image data can be accounted for. A couple of numerical simulation results illustrate the approach. 

\end{abstract}

\bigskip

\noindent \textbf{Keywords:} 
Cell motility, biomembranes, Galerkin method, interface tracking, unified form language, distributed unified numerics environment

\bigskip

\noindent \textbf{MSC(2010):}
65M60, 
92C10, 
74K15, 
92C17, 
74S05 

\section{Introduction}

Cell blebbing refers to the detachment of the plasma membrane from its actin cytoskeleton and the fast formation of a spherical protrusion. This is then followed by a slower reformation of the actin cortex close to the deformed part of the membrane and a retraction phase \cite{FacGro_CellBiol_2008,ChaPal_Nature_2008}. The phenomenon is observed in various processes including apoptosis, spreading, migration, division, embryonic development, and viral entry (see \cite{Pon_rev_2017} for a recent overview), whence it is the subject of significant ongoing research. 

Mathematical models that serve to provide some insight into the control mechanisms require an approach to describe the evolving geometry and have to account for various force contributions acting on the plasma membrane. 
It seems consensus in the literature that pressurised cytosol by actin-myosin activity in the cortex or otherwise (see \cite{ChaEtAl_Nature_2005} and references) triggers blebbing, where the pressure distribution and the blebbing dynamics are of particular interest \cite{StrGuy_BiophysJ_2016,FanHuiWeiEtAl_SciRep_2017}. 
Models for the cell membrane, which resist bending, are often based on models for biomembranes or elastic shells \cite{Hel_ZNaturfC_1973,WooEtAl_Biomech_2014}, see \cite{WooEtAl_IMAJAM_2014} for a discussion of minimal approaches. 
Tension is usually accounted for, too, \cite{SchLieKerKoz_BiophysJ_2014} but significant stretching of biomembranes beyond a few percentages leads to rupture, whence the provision of cell membrane area to allow for increases as observed during blebbing has been studied \cite{GouTarMilRazEtAl_DevelCell_2017}. 
Linker molecules serve to keep the cell membrane close to the cortex but break during blebbing \cite{StrGuy_MathMedBiol_2013}, where the cell's control ability by its biochemistry is of interest \cite{WerBurPie_2019}. 
Further questions are related to the interaction of different modes during cell motility (pseudopods versus blebs, \cite{TysZatBreKay_PNAS_2014}), whether cortex weakening is required \cite{ColEtAl_Nature_2017}, or about the origin of the fluid in the bleb (from outside of the cell via pores in the membrane \cite{TalKarSalTruZapLaP_PRL_2015} or from inside through the cortex \cite{GouBoqOliRaz_PlosONE_2019}). 

Most computational methods for cell blebbing are restricted to two spatial dimensions (2D). The membrane can then be tracked by a closed polygonal chain. Forces due to its elastic properties can be computed using finite difference techniques \cite{TysZatBreKay_PNAS_2014,ColEtAl_Nature_2017}. 
If (viscous) fluid flow inside and outside of the cell is accounted for then these membrane forces can be incorporated into the flow equations by smoothing the surface distribution (immersed boundary method). For instance, in \cite{YouMit_Biomech_2010} a vorticity-stream formulation for the flow is used, and in \cite{StrGuy_MathMedBiol_2013} a staggered grid finite difference method. As an alternative there are boundary element formulations that are set up directly on the polygonal chain \cite{LimKoonChiam_ComputMethBiomechBiomedEng_2013}. 
Only very recently, results on simulations in three spatial dimensions (3D) that account for the computationally expensive flow have been published. For instance, in \cite{CamBag_PhysFluids_2017,CamBag_SoftMatter_2018} a surface finite element method for reaction-diffusion equations on the cell membrane is coupled with a projection method on a regular mesh for the flow. Alternatively to tracking the membrane with a mesh, interface capturing methods may be used. We are not aware of such an approach to cell blebbing but in \cite{MouGom_ComputMethApplMechEng_2017} numerical simulations of a phase field model for moving cells are presented, which uses isogeometric analysis for the spatial approximation and a second order stable time discretisation involving a two-stage predictor-corrector scheme. 

Our general objective has been to develop a robust and efficient computational framework for assessing and validating blebbing models in three spatial dimensions. Towards this general objective, we specifically address the following findings in this work: 

\begin{itemize}
 \item 
 The nucleation and formation of blebs is studied using a parametric approach for the moving membrane and surface finite elements that are set up on the initial membrane. 
 
 \item 
 The force balance model in \cite{TysZatBreKay_PNAS_2014,ColEtAl_Nature_2017}, which is based on ideas in \cite{YouMit_Biomech_2010,StrGuy_MathMedBiol_2013}, is extended from curves in 2D to surfaces in 3D. More precisely, a continuum model is presented such that, when restricting the model to a curve in 2D and discretising the governing equations using standard finite difference methods, the original computational model is obtained. This {\it specific model} has been used for some numerical simulations. 
 
 \item 
 In addition to this specific model we also present and analyse a {\it variational problem} that is {\it abstract} in the sense that force contributions can be altered within some structural limits. We here particularly have the models for the membrane tension and the linker molecules connecting the membrane with the cortex in mind. 
 
 \item 
 The abstract problem is analysed for well-posedness, which is based on a Galerkin method using surface finite element techniques. Stability estimates for the semi-discrete scheme are derived and exploited to show convergence. Under slightly more restrictive assumptions on the quality of the solution, error estimates have been derived, too. 
 
 \item 
 Furthermore, a software framework for numerical simulations has been developed. It features a high-level interface to implement a problem in the {\em Unified Form Language} (UFL) \cite{AlnEtAl_ACMDL_2014}, which enables to conveniently alter the variational problem. Whilst the overall program control and the time stepping are done at the high level, bindings to the {\em Distributed Unified Numerics Environment} (DUNE) \cite{BastianEtAl_DUNE_1,BastianEtAl_DUNE_2} are used for efficiently discretising and solving the spatial problems, more precisely, the Python bindings to the DUNE-FEM module \cite{DedKloNolOhl_Comp_2010,DedNol_2018_prep}. 
 
\end{itemize}

In the following Section \ref{sec:blebs} we model the onset of cell-blebbing and introduce the abstract variational problem. The finite element approach is presented and analysed in Section \ref{sec:sfem}, where we discuss well-posedness of the variational problem and convergence. Regarding the software framework, Section \ref{sec:software} contains the time discretisation, details on the implementation, and some results of our numerical simulations.

\section{Continuum modelling of the onset of blebbing}
\label{sec:blebs}

\subsection{Setting and notation}

The blebbing cell occupies an open, time dependent bounded domain denoted by $\Omega(t)$, where $t\in[0,T]$ with some $T>0$ stands for time. Its evolving boundary $\Gamma(t) = \partial \Omega(t)$ describes the position of the cell membrane and is parametrised over the initial (smooth) surface $\surf = \Gamma(0)$, i.e. $\Gamma(t)=\solu(\surf,t)$ for some function $\solu:\surf\times[0,T]\rightarrow\R^3$ such that $\solu(\cdot,0) = \bbb{id}_{\surf}$ is the identic map of $\surf$. The dependence on $t$ will usually be dropped in the following. We denote by $d$ the signed distance to $\surf$, which is well-defined in a thin layer around $\surf$, with the convention that $d<0$ inside of $\Omega(0)$. Its derivative $\unitnormal = \nabla d$ is then the outwards pointing unit normal and its second derivative $\mmm{H} = \nabla^2 d$ is the shape operator of $\surf$. By $\bbb{\kappa} = \trace(\mmm{H}) \unitnormal$ we denote the curvature vector. The surface gradient is defined by 
\begin{equation} \label{eq:def_snab}
 \snab \eta = \nabla \eta - \unitnormal (\unitnormal \cdot \nabla \eta) = \bbb{P} \nabla \eta
\end{equation}
for any differentiable function $\eta : \surf \to \R$ extended to a thin layer around $\surf$. Here, $\bbb{P} = \bbb{I} - \unitnormal \otimes \unitnormal = \snab \bbb{id}_{\surf}$, with the identity matrix $\bbb{I} \in \mbR^{3 \times 3}$, is the projection to the tangent space. The Laplace-Beltrami operator on $\surf$ is denoted and given by $\Delta_{\surf} = \snab \cdot \snab$. When integrating over $\surf$ we write $d\sigma$ for the surface area element.

\subsection{Force balance and strong formulation}

Based on previous ideas \cite{YouMit_Biomech_2010,StrGuy_MathMedBiol_2013,TysZatBreKay_PNAS_2014,ColEtAl_Nature_2017} we postulate that a force balance of the form 
\begin{equation} \label{eq:forcebal}
 \bbb{f}_{pressure} + \bbb{f}_{coupling} + \bbb{f}_{tension} + \bbb{f}_{reg} + \bbb{f}_{drag} = 0
\end{equation}
governs the cell membrane's shape. The contributions are described below. We {\it specifically} aim for generalizing the model in \cite{TysZatBreKay_PNAS_2014,ColEtAl_Nature_2017} for curves in 2D to surfaces in 3D. With regards to the coupling and tension contributions $\bbb{f}_{coupling}$, $\bbb{f}_{tension}$ we also state {\it abstract, general} forms where only some structural assumptions are made. This way, other models for these force contributions can be investigated within our framework. 

\begin{itemize}
 \item {\em Pressure}: Building up internal pressure by actin-myosin contraction is essential to blebbing. We write the corresponding force as 
 \begin{equation} \label{eq:force_pressure}
 \bbb{f}_{pressure} = \frac{p_{0}}{V(\solu)} \unitnormal,
 \end{equation}
 where $V(\solu) = \max \{ \int_{\surf} \frac{1}{3} \solu \cdot \unitnormal d\sigma, \, 0 \}$ is an approximation of the volume of $\Omega$ and $p_{0}$ is a pressure coefficient so that $p_{0} / |V(\solu)|$ is the pressure difference between interior and exterior of the cell.
 
 \item {\em Coupling between membrane and cortex}: Forces arise due to molecules connecting the membrane with the actin cortex. When the membrane detaches during the blebbing process then these linkers break and an actin scar is left behind. In the longer run, it disintegrates and the cortex reassembles close to the new membrane position. As we are interested in the faster bleb formation we assume the cortex to be stationary and positioned a small distance $l_{0}$ away from the initial membrane. Connection points of linkers in the cortex are given by $\solu_{c} = \bbb{id}_{\surf} - l_{0} \unitnormal$ where $\bbb{id}_{\surf}$ is the identity map on $\surf$. The linker molecules can be modelled as the density of simple springs with parameter $k_{l}$ and assumed to be initially at rest, resulting in the energy density $e_{coupling} = \tfrac{k_{l}}{2} ( |\solu - \solu_{c}| - l_{0} )^2$ as long as they are intact. But as a critical length $u_{B}$ is exceeded they break, and when they get closer than a distance $u_{R}$ to the cortex then the repulsion force is increased to prevent any intersection. A model for the force then reads 
 \begin{equation} \label{eq:coupling_spec}
 \bbb{f}_{coupling} = - k_{coupling}(|\solu - \solu_{c}|) \Big{(} (\solu - \solu_{c}) - l_{0} \frac{(\solu - \solu_{c})}{|\solu - \solu_{c}|} \Big{)}, 
 \end{equation}
 with 
 \begin{equation} \label{eq:k_coupling}
 k_{coupling}(y) = k_{l} \big{(} 1 + k_{L} H(u_{R} - y) \big{)} H (u_{B} - y) 
 \end{equation}
 with some constant $k_{L}>0$ and the Heaviside function $H(r)=1$ if $r \geq 0$ and $H(r)=0$ otherwise. 
 
 \item {\em Abstract coupling including pressure}: In the abstract model, instead of \eqref{eq:coupling_spec} and \eqref{eq:force_pressure} we consider a force given by some function $\bbb{k} : \surf \times \R^3 \to \R^3$. The dependence on the first argument enables to account for given data such as the position of the cortex or the unit normal. We assume that $\bbb{k}$ is bounded and measurable with respect to the first argument and uniformly Lipschitz continuous in the second argument, i.e., there is some constant $C_{k}>0$ such that for all $\bbb{y} \in \surf$ 
 \begin{equation} \label{eq:kLipschitz}
 |\bbb{k}(\bbb{y},\bbb{a}) - \bbb{k}(\bbb{y},\bbb{b})| \leq C_{k} |\bbb{a} - \bbb{b}| \quad \forall \bbb{a},\bbb{b} \in \mbR^3. 
 \end{equation}
 This implies that $|\bbb{k}(\bbb{y},\bbb{a})| \leq C_{k} |\bbb{a}| + C$ for some constant $C>0$. The force in a point $\bbb{y} \in \surf$ is then
 \begin{equation} \label{eq:coupling_abs}
 \big{(} \bbb{f}_{coupling} + \bbb{f}_{pressure} \big{)} (\bbb{y}) = \bbb{k}(\bbb{y},\solu(\bbb{y})). 
 \end{equation}
 Note that the specific model \eqref{eq:coupling_spec}, \eqref{eq:force_pressure} does not satisfy the Lipschitz continuity condition on $\bbb{k}$. However, smoothing the Heaviside function and ensuring that the denominators do not degenerate is sufficient. With a small parameter $\eps>0$ the choice 
 \[
 k_{coupling,\eps}(z) = k_{l} \Big{(} 1 + \frac{k_{L}}{1 + \exp(2(z-u_{R})/\eps)} \Big{)} \frac{1}{1 + \exp(2(z-u_{B})/\eps)} 
 \]
 and then 
 \begin{equation} \label{eq:fcp_eps}
 \bbb{k}(\bbb{y},\solu) = - k_{coupling,\eps}(|\solu - \solu_{c}(\bbb{y})|) \Big{(} 1 - \frac{l_{0}}{|\solu - \solu_{c}(\bbb{y})| + \eps} \Big{)} (\solu - \solu_{c}(\bbb{y})) + \frac{p_{0}}{V(\solu) + \eps} \unitnormal(\bbb{y})
 \end{equation}
 is such that the assumptions are satisfied again. 
 
 \item {\em Tension}: Membranes under tension may also be modelled with linear springs, leading to an energy density of the form 
 \begin{equation} \label{eq:energy_tension}
 e_{tension} = \frac{k_{\psi}}{2} \big{(} | \snab \solu | - \sqrt{2} x_{0} \big{)}^2,
 \end{equation}
 where $k_{\psi}$ is a spring parameter and $x_{0}$ is the resting length. Note that $|\snab \solu(\cdot,0)| = | \snab \bbb{id}_{\surf}| = |\bbb{P}| = \sqrt{2}$, whence in case $x_{0}=1$ the membrane initially is at rest. The energy leads to the membrane (tension) force 
 \begin{equation} \label{eq:tension_spec}
 f_{tension} = - \partial e_{tension} = k_{\psi} \snab \cdot \Big{(} \snab \solu - \sqrt{2} x_{0} \frac{\snab \solu}{| \snab \solu |} \Big{)}.
 \end{equation}

 \item {\em Abstract tension}: In the abstract model, instead of \eqref{eq:energy_tension} we consider a tension energy with a density $\psi : \surf \times \R^{3 \times 3} \to [0,\infty)$. We assume that $\psi$ is bounded and measurable with respect to the first argument and continuously differentiable with respect to the second argument with uniformly Lipschitz continuous partial derivative, i.e., denoting with $\psi'$ this ($3 \times 3$ tensor-valued) partial derivative we assume that there is a constant $C_{\psi}>0$ such that for all $\bbb{y} \in \surf$ 
 \begin{equation} \label{eq:psipLipschitz}
 | \psi'(\bbb{y},\bbb{A}) - \psi'(\bbb{y},\bbb{B}) | \leq C_{\psi} |\bbb{A}-\bbb{B}| \quad \forall \bbb{A},\bbb{B} \in \mbR^{3 \times 3}. 
 \end{equation}
 This implies that $|\psi'(\bbb{y},\bbb{A})| \leq C_{\psi} |\bbb{A}| + C$ for some constant $C>0$. The corresponding force in a point $\bbb{y} \in \surf$ reads 
 \begin{equation} \label{eq:tension_abs}
 \bbb{f}_{tension}(\bbb{y}) = \snab \cdot \psi'(\bbb{y},\snab \solu). 
 \end{equation}
 Formally, this requires $\psi'$ to be differentiable. However, for the variational formulation that is analysed in this paper the above assumptions are sufficient. \\
 As for the coupling and the pressure term, the specific tension model \eqref{eq:energy_tension} does not satisfy the regularity requirements on $\psi$ but replacing $| \snab \solu |$ by $\sqrt{| \snab \solu |^2 + \eps}$ with some small $\eps>0$ does, and then 
 \begin{equation} \label{eq:psip_eps}
 \psi'(\bbb{y},\bbb{A}) = k_{\psi} \Big{(} 1 - \frac{\sqrt{2} x_{0}}{\sqrt{| \bbb{A} |^2 + \eps}} \Big{)} \bbb{A}. 
 \end{equation}

 \item {\em Regularisation}: The membrane resists bending, though much less than stretching. The corresponding elastic energy may be modelled as in \cite{Hel_ZNaturfC_1973}. However, its impact on the blebbing site selection and its shape has been found to be significantly smaller than that of the tension \cite{TysZatBreKay_PNAS_2014}. We therefore choose a simplified linear model that may be considered as a regularization: $e_{reg} = \tfrac{k_{b}}{2} | \Delta_{\surf} \solu |^2$ where $k_{b}$ is a (small) bending resistance coefficient, so that the regularization force is given by 
 \[
 \bbb{f}_{reg} = - \partial e_{reg} = k_{b} \Delta_{\surf}^2 \solu = - k_{b} \Delta_{\surf} \solw, \quad \mbox{where } \solw = - \Delta_{\surf} \solu
 \]
 will be referred to as curvature in the following. 
 
 \item {\em Viscous drag}: The (viscous) fluid motion in the interior and exterior of the cell is not explicitly modeled but only accounted for by a viscous drag force that opposes any membranes movement:
 \[
 \bbb{f}_{drag} = - \omega \pd_{t} \solu,
 \]
 where $\omega$ is an effective material parameter related to the viscosity of the ambient fluid. 

\end{itemize}

With the abstract choices for tension \eqref{eq:tension_abs} and coupling \eqref{eq:coupling_abs} the force balance \eqref{eq:forcebal} results in the PDE
\begin{equation} \label{eq:PDEstrong}
 \omega \pd_t \solu + k_{b} \Delta_{\surf}^2 \solu - \snab \cdot \psi'(\snab \solu) + \bbb{k}(\solu) = 0. 
\end{equation}

The model with the specific choices \eqref{eq:tension_spec} and \eqref{eq:coupling_spec} has been used for numerical simulations in Section \ref{sec:geomsim}. It has been non-dimensionalised by choosing a length scale $U$ and $k_{\psi}$ as an energy density scale. Choosing the time scale $T = U^2 \omega / k_{\psi}$ then eliminates the viscosity parameter. Writing again $\surf$, $\solu$, $\solu_{c}$, $u_{B}$, $u_{R}$, $l_{0}$, and $V(\solu)$ for the respective non-dimensional objects we obtain the equation 
\begin{align}
 \pd_t \solu =& - \lambda_{b} \Delta_{\surf}^2 \solu + \snab \cdot \Big{(} \snab \solu - \sqrt{2} x_{0} \frac{\snab \solu}{|\snab \solu|} \Big{)} \label{eq:PDEnondim} \\
 & - \lambda_{l} \big{(} 1 + k_{L} H(u_{R} - |\solu - \solu_{c}|) \big{)} H (u_{B} - |\solu - \solu_{c}|) \Big{(} (\solu - \solu_{c}) - l_{0} \frac{\solu - \solu_{c}}{|\solu - \solu_{c}|} \Big{)} + \frac{\lambda_{p}}{V(\solu)} \unitnormal. \nonumber
\end{align}
with the non-dimensional parameters $\lambda_{b} = k_{b} / (U^2 k_{\psi})$, $\lambda_{l} = k_{l} U^2 / k_{\psi}$, and $\lambda_{p} = p_{0} / (U^2 k_{\psi})$ (noting that $x_{0}$ and $k_{L}$ were non-dimensional already). The model in \cite{TysZatBreKay_PNAS_2014,ColEtAl_Nature_2017} is obtained by reducing the dimension of this equation \eqref{eq:PDEnondim} (i.e., $\surf$ is a curve in 2D). The curve then is parametrized by arc-length, and their computational model is obtained by using standard finite difference techniques.

\subsection{Variational formulation}

We aim for approximating the PDE problem using finite elements and thus require a variational formulation. Writing $\snab = (\underline{D}_1,\underline{D}_2,\underline{D}_3)$ for the components of the surface gradient, we say that a function $f \in L^1(\surf)$ has a weak derivative $\eta_i = \underline{D}_i f \in L^1(\surf)$ if 
\[
 \int_{\surf} f \underline{D}_i \varphi d\sigma = -\int_{\surf} \eta_i \varphi d\sigma + \int_{\surf} f \varphi \bbb{\kappa}_{i} d\sigma, \quad i=1,2,3, 
\] 
holds true for all smooth functions $\varphi$ with compact support. We use $\snab$ again to denote this weak derivative. Sobolev spaces on $\surf$ are defined by $H^{0}(\surf) = L^{2}(\surf)$ and 
\[
 H^{k} = H^{k}(\surf) = \big{\{} \eta \in H^{k-1}(\surf) \, \big{|} \, \snab \eta \in L^{2}(\surf) \big{\}}. 
\]
On these we will consider the Bochner spaces 
\[
 L^{2}_{H^{k}} = \big{\{} \zeta : (0,T) \to H^{k} \, \big{|} \, \int_{0}^{T} \| \zeta(t) \|_{H^{k}}^{2} dt < \infty \big{\}}, \quad 
 L^{\infty}_{H^{k}} = \big{\{} \zeta : (0,T) \to H^{k} \, \big{|} \, \esssup_{t\in(0,T)} \| \zeta(t) \|_{H^{k}} < \infty \big{\}}. 
\]
For the $L^{2}$ 'mass' inner product of vector valued functions $\bbb{v},\bbb{z} \in (L^{2})^3$ we write
\[
 m(\bbb{v},\bbb{z}) = \int_{\surf} \bbb{v} \cdot \bbb{z} d\sigma. 
\]
Note that, thanks to the Lipschitz assumption on $\bbb{k}$ also $m(\bbb{k}(\bbb{v}),\bbb{z})$ is well-defined for $\bbb{v},\bbb{z} \in (L^{2}(\surf))^3$. For the $H^{1}$ 'stiffness' semi-inner product of vector valued functions $\bbb{v},\bbb{z} \in (H^{1}(\surf))^3$ we write
\[
 s(\bbb{v},\bbb{z}) = \int_{\surf} \snab \bbb{v} \colon \snab \bbb{z} d\sigma 
\]
where $\bbb{A}\colon\bbb{B} = \sum_{i,j=1}^3 \bbb{A}_{i,j} \bbb{B}_{i,j}$ for tensors $\bbb{A},\bbb{B} \in \mbR^{3 \times 3}$. With a slight abuse of notation we also define 
\[
 s(\psi';\bbb{v},\bbb{z}) = \int_\surf \psi'(\snab \bbb{v}) : \snab \bbb{z} d\sigma. 
\]
For the (weak) variational formulation of \eqref{eq:PDEstrong} we assume without loss of generality that $\omega = 1$ and $k_b = 1$. 
\begin{problem} \label{prob:weak}
 Find $\solu,\solw \in L^2(0,T;H^1(\surf))$ with $\pd_t \solu \in L^2(0,T;L^2(\surf))$ such that for all $\tfu,\tfw \in H^1(\surf)$ and almost all $t \in (0,T)$
 \begin{align}
 m(\pd_t \solu,\tfu) + s(\solw,\tfu) + s(\psi';\solu,\tfu) + m(\bbb{k}(\solu),\tfu) &= 0, \label{eq:weak1} \\
 s(\solu,\tfw) - m(\solw,\tfw) &= 0, \label{eq:weak2} 
 \end{align}
 and such that $\solu(\cdot,0) = \bbb{id}_\surf$ is the identic map of $\surf$. 
\end{problem}

\section{Surface finite element approach}
\label{sec:sfem}

\subsection{Surface triangulations and finite elements}

The membrane $\surf$ is approximated by a family of polyhedral surfaces $\{ \surfh \}_{h}$, each one being of the form 
\begin{align*}
 \surfh = \bigcup_{E \in \fff{T}_{h}} E \subset \mathbb{R}^3
\end{align*}
where the $E$ are closed, flat non-degenerate triangles whose pairwise intersection is a complete edge, a single point, or empty. 
For each $E$ belonging to the set $\fff{T}_{h}$ of triangles we denote by $h(E) =\diam(E)$ its diameter and then identify $h = \max_{E \in \fff{T}_{h}} h(E)$ with the maximal edge length of the whole triangulation. We assume that the vertices of $\surfh$ belong to $\surf$ so that $\surfh$ is a piecewise linear interpolation of $\surf$. We also assume that $h$ is small enough so that $\surfh$ lies in the thin layer around $\surf$ in which the signed distance function $d$ is well-defined. Furthermore, we assume that $\surfh$ is the boundary of a domain that approximates $\Omega(0)$ and denote the external unit normal, which is defined on the triangles and thus piecewise constant, with $\unitnormalh$. By $\bbb{P}_{h} = \bbb{I} - \unitnormalh \otimes \unitnormalh$ we denote the projection to the tangent space in points on $\surfh$ where it exists (i.e., in the interiors of the triangles $E \in \fff{T}_{h}$). Following \eqref{eq:def_snab} this gives rise to the piecewise (i.e., triangle by triangle) definition of a surface gradient $\snabh$ on $\surfh$. The same notation is used again for the weak derivative. We write $d\sigma_{h}$ for the surface area element when integrating functions on $\surfh$. 

For the error analysis we have to measure the distance of functions such as $\solu$ on $\surf$ to functions such as the finite element solution on $\surfh$. For this purpose, consider the bijection given defined by 
\begin{equation} \label{eq:bijection}
 \surfh \ni \bbb{y}_{h} = \bbb{y} + d(\bbb{y}_{h}) \unitnormal(\bbb{y}), \quad \bbb{y} \in \surf.
\end{equation}
This bijection gives rise to the \emph{lift} of any function $\eta : \surfh \to \mbR$ to $\surf$ defined by 
\[
 \eta^\ell : \surf \to \mbR, \quad \eta^\ell(\bbb{y}) = \eta(\bbb{y}_{h}). 
\]
Writing $\mu_{h}$ for the local change of the surface area element, i.e., $d\sigma_{h} = \mu_{h} d\sigma$, integrals transform as 
\begin{equation} \label{eq:transint}
 \int_{\surfh} \eta d\sigma_{h} = \int_{\surf} \eta^\ell \mu_{h} d\sigma. 
\end{equation}
A straightforward calculation show that in points where both $\eta$ and $\eta^\ell$ are differentiable 
\begin{equation} \label{eq:transdiff}
 \snabh \eta (\bbb{y}_{h}) = \bbb{Q}_{h}(\bbb{y}) \snab \eta^\ell(\bbb{y}) \quad \mbox{where} \quad \bbb{Q}_{h}(\bbb{y}) = \bbb{P}_{h}(\bbb{y}_{h}) (\bbb{I} - d(\bbb{y}_{h}) \mmm{H}(\bbb{y})) \bbb{P}(\bbb{y}). 
\end{equation}
The following two lemmas on the errors due to the approximation of the surface and on the stability of the lift are due to \cite{Dzi_PDECV_1988,DziEll_JComputMath_2007}. 

\begin{lemma} \label{lem:geomest}
The following estimates hold true for some constant $C>0$ independent of $h$: 
\begin{align*}
\left\| 1-\mu_{h} \right\|_{L^\infty(\surf)} & \leq C h^2, \\
\left\| \bbb{Q}_{h} - \bbb{P} \right\|_{L^\infty(\surf)} & \leq C h. 
\end{align*}
\end{lemma}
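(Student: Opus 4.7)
My plan is to follow the classical strategy of Dziuk and of Dziuk-Elliott, reducing both estimates to a few scalar geometric bounds on each flat triangle $E\in\fff{T}_h$ and then taking the maximum. Because $\surfh$ interpolates $\surf$ at the vertices of each triangle, standard interpolation theory for a smooth graph over a tangent plane gives the two fundamental building blocks
\[
 \| d \|_{L^\infty(\surfh)} \leq C h^2, \qquad \| \unitnormal - \unitnormalh \|_{L^\infty(E)} \leq C h,
\]
where in the first estimate I use Taylor expansion of $d$ around a vertex of $E$ (where $d=0$ and $\snab d$ is tangent to $\surf$), and in the second I compare the constant normal to the secant plane with the surface normal at a vertex. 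Both constants depend only on bounds on the second fundamental form of $\surf$, which is smooth.

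For the second estimate of the lemma I would use the definition \eqref{eq:transdiff} to write
\[
 \bbb{Q}_h - \bbb{P} = \bbb{P}_h \bbb{P} - \bbb{P} - d \bbb{P}_h \mmm{H} \bbb{P} = (\bbb{P}_h - \bbb{P}) \bbb{P} - d \bbb{P}_h \mmm{H} \bbb{P}.
\]
Since $\bbb{P}_h - \bbb{P} = \unitnormal \otimes \unitnormal - \unitnormalh \otimes \unitnormalh$ can be written as a linear combination of $(\unitnormal-\unitnormalh)\otimes\unitnormal$ and $\unitnormalh\otimes(\unitnormal-\unitnormalh)$, the building blocks above give $\|\bbb{P}_h - \bbb{P}\|_{L^\infty} \le Ch$ and the $d$-term is even smaller, of order $h^2$, because $\mmm{H}$ is bounded. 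Summed, this yields $\|\bbb{Q}_h - \bbb{P}\|_{L^\infty(\surf)} \le Ch$.

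For the estimate on $1-\mu_h$ I would derive an explicit expression for the local area change under the map $\bbb{y}\mapsto\bbb{y}_h$ implicitly defined by \eqref{eq:bijection}. A straightforward computation of its tangential Jacobian gives a formula of the form
\[
 \mu_h = \frac{\det_{\mathrm{tang}}(\bbb{P} - d \mmm{H})}{\unitnormal \cdot \unitnormalh},
\]
which I would verify by differentiating $\bbb{y}_h = \bbb{y} + d(\bbb{y}_h)\unitnormal(\bbb{y})$ tangentially and using that $\unitnormalh$ is the unit normal to $\surfh$. Here the numerator equals $(1-d\kappa_1)(1-d\kappa_2) = 1 + O(h^2)$ by the uniform bound on the principal curvatures $\kappa_i$ of $\surf$ together with $d=O(h^2)$. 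The denominator satisfies
\[
 \unitnormal \cdot \unitnormalh = 1 - \tfrac{1}{2} |\unitnormal - \unitnormalh|^2 = 1 + O(h^2),
\]
since both vectors are unit. Combining these expansions gives $\mu_h = 1 + O(h^2)$ uniformly on $\surf$.

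The step I expect to be most delicate is precisely this last one: one must not overlook that, although $\unitnormal-\unitnormalh$ is only of order $h$, it enters $\mu_h$ only through the even quantity $\unitnormal\cdot\unitnormalh$, and this is what upgrades the naive $O(h)$ estimate on $\mu_h$ to the sharp $O(h^2)$ bound needed for the subsequent error analysis. Once the scalar building blocks and the explicit formula for $\mu_h$ are in place, all the remaining work is elementary algebra with bounded quantities.
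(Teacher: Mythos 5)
The paper does not give a proof of this lemma; it attributes it to Dziuk (1988) and Dziuk--Elliott (2007). Your proposal reconstructs exactly the classical argument from those references, so in spirit it is the ``same'' proof. The two scalar building blocks, $\|d\|_{L^\infty(\surfh)}\le Ch^2$ (from $d$ vanishing at the three vertices of a triangle of diameter $h$, plus a bounded second derivative) and $\|\unitnormal-\unitnormalh\|_{L^\infty}\le Ch$, are precisely the right ingredients. Your treatment of $\bbb{Q}_h-\bbb{P}$, decomposing $\bbb{P}_h-\bbb{P}=(\unitnormal-\unitnormalh)\otimes\unitnormal+\unitnormalh\otimes(\unitnormal-\unitnormalh)$ and bounding the $d$-term separately, is correct and gives the $O(h)$ bound. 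And you correctly identify the key point for the sharper $O(h^2)$ bound on $1-\mu_h$: although $\unitnormal-\unitnormalh$ is only $O(h)$, it enters $\mu_h$ through the even quantity $\unitnormal\cdot\unitnormalh=1-\tfrac12|\unitnormal-\unitnormalh|^2=1+O(h^2)$, while the curvature factors are $1+O(d)=1+O(h^2)$. That is indeed the whole proof.

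One small remark on the explicit formula you quote. If one differentiates $\bbb{y}\mapsto\bbb{y}+\delta(\bbb{y})\unitnormal(\bbb{y})$ tangentially and projects out the normal component as you sketch, with $\mmm{H}$ the shape operator of $\surf$ at the base point $\bbb{y}$ (as defined in the paper), the tangential Jacobian of the normal displacement comes out as $\det_{\mathrm{tang}}(\bbb{P}+\delta\mmm{H})=(1+d\kappa_1)(1+d\kappa_2)$, i.e.\ with a plus sign, so
\[
\mu_h=\frac{(1+d\kappa_1)(1+d\kappa_2)}{\unitnormal\cdot\unitnormalh},
\]
which you can verify on concentric spheres. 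The version with $(1-d\kappa_i)$ appears in the literature when $\kappa_i$ are taken to be the principal curvatures of the level set of $d$ through $\bbb{y}_h$ (equivalently, eigenvalues of $\nabla^2 d$ evaluated at $\bbb{y}_h$) rather than of $\surf$ itself; the two agree to leading order in $d$. So as written your formula mixes the two conventions, but this has no effect on the estimate: both curvature products are $1+O(h^2)$. Apart from this cosmetic point the argument is complete.
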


\begin{lemma} \label{lem:lifting}
Let $\eta : \surfh \to \mbR$ with its lifted counterpart $\eta^\ell : \surf \to \mbR$. Let also $E \in \fff{T}_{h}$ and $E^\ell = \{ \bbb{y} \in \surf \, | \, \bbb{y}_{h} \in E \}$. The following estimates hold true with a constant $C>0$ independent of $h$ and the element $E$: 
\begin{alignat*}{6}
& \frac{1}{C} \left \| \eta^\ell \right \|_{L^2(E^\ell)} && \leq \left \| \eta \right \|_{L^2(E)} && \leq C \left \| \eta^\ell \right \|_{L^2(E^\ell)}, \\
& \frac{1}{C} \left \| \snab \eta^\ell \right \|_{L^2(E^\ell)} && \leq \left \| \snabh \eta \right \|_{L^2(E)} && \leq C \left \| \snab \eta^\ell \right \|_{L^2(E^\ell)}. 
\end{alignat*}
These inequalities generalize to the whole surfaces by summing over the elements. 
\end{lemma}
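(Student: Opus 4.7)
The plan is to reduce both estimates to the geometric bounds of Lemma \ref{lem:geomest} via the change-of-variables formulas \eqref{eq:transint} and \eqref{eq:transdiff} applied element-wise, rather than globally.

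For the $L^2$ bound, I would start from
\[
 \| \eta \|_{L^2(E)}^2 = \int_E \eta^2 \, d\sigma_h = \int_{E^\ell} (\eta^\ell)^2 \, \mu_h \, d\sigma,
\]
which is just the element-wise version of \eqref{eq:transint}. By Lemma \ref{lem:geomest}, $\| 1 - \mu_h \|_{L^\infty(\surf)} \leq C h^2$, so for $h$ sufficiently small $\mu_h$ is bounded above and below by positive constants independent of $h$ and of the particular element $E$. Sandwiching $\mu_h$ then gives the $L^2$ equivalence with a uniform constant.

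For the gradient bound, \eqref{eq:transdiff} yields pointwise $\snabh \eta(\bbb{y}_h) = \bbb{Q}_h(\bbb{y}) \snab \eta^\ell(\bbb{y})$, so
\[
 \| \snabh \eta \|_{L^2(E)}^2 = \int_{E^\ell} | \bbb{Q}_h \snab \eta^\ell |^2 \, \mu_h \, d\sigma.
\]
The upper bound follows immediately from $\| \bbb{Q}_h \|_{L^\infty} \leq \| \bbb{P} \|_{L^\infty} + \| \bbb{Q}_h - \bbb{P} \|_{L^\infty} \leq C$ combined with the upper bound on $\mu_h$. The lower bound is the subtle step: a priori $\bbb{Q}_h = \bbb{P}_h (\bbb{I} - d \mmm{H}) \bbb{P}$ is singular (its range and kernel degenerate as $h \to 0$ in the wrong ways), so one cannot simply invert it. The key observation is that $\snab \eta^\ell$ is pointwise tangential to $\surf$, i.e.\ $\bbb{P} \snab \eta^\ell = \snab \eta^\ell$, and therefore
\[
 | \bbb{Q}_h \snab \eta^\ell | \geq | \bbb{P} \snab \eta^\ell | - | (\bbb{Q}_h - \bbb{P}) \snab \eta^\ell | \geq (1 - Ch) | \snab \eta^\ell |,
\]
using the second estimate of Lemma \ref{lem:geomest}. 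For $h$ small enough this gives the desired uniform lower bound, and together with the lower bound on $\mu_h$ concludes the gradient equivalence on $E$.

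The hardest part is this invertibility-on-the-tangent-space argument for $\bbb{Q}_h$; once that is in place, everything else is straightforward manipulation of change-of-variables formulas. The passage from a single element $E$ to the whole surface $\surfh$ follows by squaring, summing over $E \in \fff{T}_h$, and taking square roots, since the per-element constants are uniform in $E$.
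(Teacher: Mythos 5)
Your proposal is correct, though there is nothing in the paper to compare it against: the paper simply cites this lemma from Dziuk (1988) and Dziuk--Elliott (2007) without giving a proof. The argument you give --- element-wise change of variables via \eqref{eq:transint} and \eqref{eq:transdiff}, sandwiching $\mu_h$ using $\|1-\mu_h\|_{L^\infty}\leq Ch^2$, and handling the lower bound on $|\bbb{Q}_h\snab\eta^\ell|$ by exploiting that $\snab\eta^\ell$ is tangential so $\bbb{P}$ acts on it as the identity and the reverse triangle inequality applies with $\|\bbb{Q}_h-\bbb{P}\|_{L^\infty}\leq Ch$ --- is exactly the standard proof found in those references, and you have correctly identified the one non-obvious step (that the singular matrix $\bbb{Q}_h$ is uniformly bounded below on the tangent space of $\surf$, not on all of $\R^3$).
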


The standard finite element space used throughout is
\begin{align*}
 S_{h} = \{ \phi_{h} \in C^0 (\surfh) \, | \, \phi_{h}|_E \text{ is linear for each } E \in \fff{T}_{h} \}.
\end{align*}
Note that the identity map of $\surfh$ belongs to $S_{h}^3$. Bilinear forms corresponding to $m$ and $s$ are defined for finite element functions $\bbb{R}_{h},\bbb{Z}_{h} \in S_{h}^3$ on the triangulation by 
\begin{align*}
 m_{h}(\bbb{R}_{h},\bbb{Z}_{h}) = \int_{\surfh} \bbb{R}_{h} \cdot \bbb{Z}_{h} d\sigma_{h}, \quad 
 s_{h}(\bbb{R}_{h},\bbb{Z}_{h}) = \int_{\surfh} \snabh \bbb{R}_{h} \colon \snabh \bbb{Z}_{h} d\sigma_{h}, 
\end{align*}
and we will also use again the notation $s_{h}(\psi';\bbb{R}_{h},\bbb{Z}_{h}) = \int_{\surfh} \psi'(\snabh \bbb{R}_{h}) : \snabh \bbb{Z}_{h} d\sigma_{h}.$ For the discrepancy to the forms on $\surf$ we note the following result: 

\begin{lemma}[\cite{Dzi_PDECV_1988}] \label{lem:compforms} There is a constant $C>0$ independent of $h$ such that for all $\bbb{R}_{h}, \bbb{Z}_{h} \in S_{h}^3$ 
\begin{align*}
 | m_{h}(\bbb{R}_{h},\bbb{Z}_{h}) - m(\bbb{r}_{h},\bbb{z}_{h})| &\leq C h^2 \| \bbb{R}_{h} \|_{L^2(\surfh)} \| \bbb{Z}_{h} \|_{L^2(\surfh)}, \\
 | s_{h}(\bbb{R}_{h},\bbb{Z}_{h}) - s(\bbb{r}_{h},\bbb{z}_{h})| &\leq C h^2 \| \snabh \bbb{R}_{h} \|_{L^2(\surfh)} \| \snabh \bbb{Z}_{h} \|_{L^2(\surfh)}, 
\end{align*}
where $\bbb{r}_{h}=\bbb{R}_{h}^\ell$ and $\bbb{z}_{h}=\bbb{Z}_{h}^\ell$. 
\end{lemma}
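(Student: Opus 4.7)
The plan is to pull both integrals back from $\surfh$ to $\surf$ via the bijection $\bbb{y}_h \leftrightarrow \bbb{y}$ from \eqref{eq:bijection} and then absorb the geometric discrepancies into $L^\infty$ bounds on suitable error tensors, closing the estimate with Cauchy--Schwarz and the $L^2$ equivalence from Lemma \ref{lem:lifting}.

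For the mass form, identity \eqref{eq:transint} applied component-wise gives
\[
 m_h(\bbb{R}_h,\bbb{Z}_h) = \int_\surf \bbb{r}_h \cdot \bbb{z}_h \, \mu_h \, d\sigma,
\]
so $m_h(\bbb{R}_h,\bbb{Z}_h) - m(\bbb{r}_h,\bbb{z}_h) = \int_\surf \bbb{r}_h \cdot \bbb{z}_h \, (\mu_h - 1) \, d\sigma$. The bound $\|\mu_h - 1\|_{L^\infty(\surf)} \leq C h^2$ from Lemma \ref{lem:geomest}, together with Cauchy--Schwarz and the $L^2$ equivalence of Lemma \ref{lem:lifting}, immediately yields the first estimate.

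For the stiffness form, applying \eqref{eq:transdiff} to each component of $\snab \bbb{r}_h$ and $\snab \bbb{z}_h$ shows $\snabh \bbb{R}_h = (\snab \bbb{r}_h) \bbb{Q}_h^T$ along the lifting. Rearranging the double contraction and combining with \eqref{eq:transint} gives
\[
 s_h(\bbb{R}_h,\bbb{Z}_h) = \int_\surf \snab \bbb{r}_h : \bigl( \snab \bbb{z}_h \, \bbb{B}_h \bigr) \, d\sigma, \qquad \bbb{B}_h := \mu_h \, \bbb{Q}_h^T \bbb{Q}_h.
\]
Since the rows of $\snab \bbb{r}_h$ and $\snab \bbb{z}_h$ are tangential to $\surf$, one has $\snab \bbb{r}_h \, \bbb{P} = \snab \bbb{r}_h$ (and similarly for $\bbb{z}_h$). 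Using the symmetry of $\bbb{P}$ and $\bbb{P}^2 = \bbb{P}$, one may therefore sandwich $\bbb{B}_h$ and $\bbb{P}$ between two copies of $\bbb{P}$ without changing the value of either integral, arriving at
\[
 s_h(\bbb{R}_h,\bbb{Z}_h) - s(\bbb{r}_h,\bbb{z}_h) = \int_\surf \snab \bbb{r}_h : \snab \bbb{z}_h \, \bigl( \bbb{P} \bbb{B}_h \bbb{P} - \bbb{P} \bigr) \, d\sigma.
\]

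The main obstacle is the refined geometric bound $\|\bbb{P} \bbb{B}_h \bbb{P} - \bbb{P}\|_{L^\infty(\surf)} \leq C h^2$. The naive estimate from Lemma \ref{lem:geomest} only produces an $O(h)$ bound for $\bbb{Q}_h^T \bbb{Q}_h - \bbb{P}$, which is insufficient. To extract the extra power of $h$ I would substitute the explicit expression $\bbb{Q}_h = \bbb{P}_h(\bbb{I} - d\mmm{H})\bbb{P}$, use the pointwise bound $|d| \leq C h^2$ on $\surfh$ (a standard interpolation estimate, since $\surfh$ matches $\surf$ at the vertices of a shape-regular triangulation of a smooth surface), and expand $\mu_h = 1 + O(h^2)$. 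A direct computation then shows that the $O(h)$ deviations coming from $\bbb{P}_h - \bbb{P}$ enter only via terms proportional to the deviation of $\unitnormalh$ from $\unitnormal$, whose squared contribution is $O(h^2)$ and whose cross terms are annihilated once flanked by $\bbb{P}$. This is exactly the cancellation established by Dziuk; with this refined bound in hand, Cauchy--Schwarz applied to the gradient integrand and the gradient part of Lemma \ref{lem:lifting} complete the second estimate.
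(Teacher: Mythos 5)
Your proposal is correct and reproduces the standard argument of Dziuk (1988), which the paper cites rather than reproves, so there is no in-text proof to compare against. The one genuinely delicate point --- why the error in the stiffness form is $O(h^2)$ when Lemma \ref{lem:geomest} only gives $\| \bbb{Q}_{h} - \bbb{P}\|_{L^\infty} \leq C h$ --- is the right one to single out, and your sketch of the cancellation is sound. To make it fully explicit: writing $\bbb{Q}_{h} = \bbb{P}_{h}(\bbb{I} - d\mmm{H})\bbb{P}$ and using $\bbb{P}_{h}^2 = \bbb{P}_{h}$ together with $|d| \leq C h^2$ gives $\bbb{Q}_{h}^T\bbb{Q}_{h} = \bbb{P}\bbb{P}_{h}\bbb{P} + O(h^2)$, and since $\bbb{P}\unitnormal = 0$ the only surviving correction is
\[
\bbb{P}\bbb{P}_{h}\bbb{P} - \bbb{P} = -(\bbb{P}\unitnormalh)\otimes(\bbb{P}\unitnormalh),
\]
whose norm is $|\bbb{P}\unitnormalh|^2 = 1 - (\unitnormal\cdot\unitnormalh)^2 = O(h^2)$ because the tangential part $\bbb{P}\unitnormalh$ of the discrete normal is $O(h)$ and enters quadratically. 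Combined with $\mu_h = 1 + O(h^2)$ this gives $\|\bbb{P}\bbb{B}_h\bbb{P} - \bbb{P}\|_{L^\infty(\surf)} \leq C h^2$, and Cauchy--Schwarz with Lemma \ref{lem:lifting} finishes the estimate exactly as you say.
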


We define the \emph{Ritz projection} $\Pi_{h} : H^1(\surf) \to S_{h}$ by 
\[
 s_{h}(\Pi_{h} (\xi), \phi_{h}) = s(\xi, \phi_{h}^\ell) \quad \forall \phi_{h} \in S_{h}, \qquad \int_{\surfh} \Pi_{h}(\xi) d\sigma_{h} = \int_{\surf} \xi d\sigma. 
\]
It's lift is denoted by $\pi_{h}(\xi) = \Pi_{h}(\xi)^\ell$ and has the following approximation properties: 

\begin{lemma}[\cite{Dzi_PDECV_1988}] \label{lem:Ritz} If $\xi \in H^1(\surf)$ then 
\[
 \| \xi - \pi_{h}(\xi) \|_{H^1(\surf)} \to 0, \quad \| \xi - \pi_{h}(\xi) \|_{L^2(\surf)} \leq C h \| \xi \|_{H^1(\surf)},
\]
and if $\xi \in H^2(\surf)$ then 
\[
 \| \xi - \pi_{h}(\xi) \|_{L^2(\surf)} + h \| \snab (\xi - \pi_{h}(\xi)) \|_{L^2(\surf)} \leq C h^2 \| \xi \|_{H^2(\surf)}
\]
where $C>0$ is a constant independent of $h$ and $\xi$. \\
The projection and the convergence results extend to functions in $L^2_{H^1}$ with a pointwise (in time) definition of the projection and with the norms $\| \cdot \|_{L^2_{H^k}}$. 
\end{lemma}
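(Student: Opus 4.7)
The plan is to verify the stated Ritz projection estimates by combining standard Galerkin best-approximation arguments with the surface/lift machinery provided by Lemmas \ref{lem:geomest}--\ref{lem:compforms}.

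First I would establish well-posedness of $\Pi_h$. The bilinear form $s_h$ is only a seminorm on $S_h$ (vanishing on constants), but the normalisation $\int_{\surfh} \Pi_h(\xi)\,d\sigma_h = \int_{\surf} \xi\,d\sigma$ fixes the constant mode. Working on the quotient by constants, a discrete Poincar\'e inequality on $\surfh$ (which transfers from $\surf$ via Lemma \ref{lem:lifting}) makes $s_h$ coercive, so $\Pi_h(\xi)$ exists and is unique. Setting $\phi_h = \Pi_h(\xi)$ in the defining relation together with Cauchy--Schwarz yields the stability bound $\|\snabh \Pi_h(\xi)\|_{L^2(\surfh)} \leq C \|\snab \xi\|_{L^2(\surf)}$, and the mean constraint combined with Poincar\'e gives the same bound for the full $H^1$ norm.

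Next I would prove the $H^1$-type bound for $\xi \in H^2(\surf)$. The key ingredient is the lifted Galerkin orthogonality: for any $\phi_h \in S_h$ with lift $\phi_h^\ell$,
\begin{equation*}
  s(\xi - \pi_h(\xi), \phi_h^\ell) = s(\xi,\phi_h^\ell) - s_h(\Pi_h(\xi),\phi_h) + \bigl(s_h(\Pi_h(\xi),\phi_h) - s(\pi_h(\xi),\phi_h^\ell)\bigr),
\end{equation*}
where the first bracket vanishes by definition of $\Pi_h$ and the second is controlled by $Ch^2 \|\snabh \Pi_h(\xi)\|_{L^2(\surfh)}\|\snabh \phi_h\|_{L^2(\surfh)}$ using Lemma \ref{lem:compforms}. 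Choosing $\phi_h$ to be the Lagrange interpolant of $\xi$ pulled back to $\surfh$ and invoking the standard surface interpolation estimate $\|\snab(\xi - \phi_h^\ell)\|_{L^2(\surf)} \leq C h \|\xi\|_{H^2(\surf)}$ (which is a by-product of \cite{Dzi_PDECV_1988}), a C\'ea-style manipulation yields $\|\snab(\xi - \pi_h(\xi))\|_{L^2(\surf)} \leq C h \|\xi\|_{H^2(\surf)}$. The convergence statement for merely $\xi \in H^1$ then follows from $H^1$-stability of $\pi_h$ plus density of $H^2(\surf)$ in $H^1(\surf)$, via a standard $3\varepsilon$ argument.

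For the $L^2$ estimates I would use an Aubin--Nitsche duality argument. Given $e := \xi - \pi_h(\xi)$ with zero mean (up to the geometric correction from $\mu_h \neq 1$), solve the dual problem $-\Delta_{\surf} z = e$ with zero mean, obtaining elliptic regularity $\|z\|_{H^2(\surf)} \leq C \|e\|_{L^2(\surf)}$. Testing against $e$ and inserting $\pm \pi_h(z)$ gives
\begin{equation*}
  \|e\|_{L^2(\surf)}^2 = s(e, z - \pi_h(z)) + s(e, \pi_h(z)),
\end{equation*}
where the first term is bounded by $C h \|\snab e\|_{L^2(\surf)} \|z\|_{H^2(\surf)}$ via the $H^1$ estimate applied to $z$, and the second term is the geometric consistency defect controlled again by Lemma \ref{lem:compforms} and Lemma \ref{lem:geomest}, giving $O(h^2)\|\xi\|_{H^1}\|z\|_{H^1}$. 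Combining yields the $O(h)$ bound for $H^1$ data and, when $\xi \in H^2$, the $O(h^2)$ bound by inserting the sharper $H^1$-estimate for $\snab e$. The extension to $L^2_{H^k}$ follows by applying the pointwise-in-time estimates and integrating in $t$, using measurability of $t \mapsto \pi_h(\xi(t))$.

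The main obstacle is bookkeeping the geometric error terms: every transfer between $\surf$ and $\surfh$ introduces an $O(h^2)$ discrepancy through $\mu_h$ and $\bbb{Q}_h$ that must be absorbed into the $O(h)$ and $O(h^2)$ targets without double-counting, particularly in the duality step where the dual solution $z$ has only $H^2$ regularity and Lemma \ref{lem:compforms} is only stated for finite element functions, so one has to work with $\pi_h(z)$ rather than $z$ itself.
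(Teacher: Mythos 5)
The paper does not prove this lemma; it cites it directly from Dziuk's 1988 work \cite{Dzi_PDECV_1988}, treating the Ritz projection estimates as given. Your sketch essentially reconstructs the standard argument from that reference: well-posedness via a mean-value constraint and discrete Poincar\'e inequality, Galerkin quasi-orthogonality modulo the geometric consistency defect controlled by Lemma \ref{lem:compforms}, a C\'ea-type manipulation with the interpolation error for the $H^1$ rate, Aubin--Nitsche duality for the $L^2$ rates, and a density argument for the rate-free $H^1$ convergence for $\xi\in H^1(\surf)$ only. The structure is sound and the pieces fit: in particular you are right that the $O(h)$ $L^2$ bound for $\xi\in H^1$ does not need an $H^1$ rate, only stability of $\pi_h$ in $H^1$ together with the duality step, and you correctly flag the bookkeeping caveat that the zero-mean property of the error holds only up to an $O(h^2)$ discrepancy from $\mu_h\neq 1$, which is lower order and absorbed. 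Since the paper offers no proof of its own, there is nothing to contrast with; your outline is a faithful and correct reconstruction of the cited result.
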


\subsection{Semi-discrete problem}

In applications, we may only have access to a triangulated surface $\surfh$ but not $\surf$, for instance, when $\surfh$ is computed from image data. Fields such as $\unitnormal$ or $\solu_{c}$ then are only approximately known in terms of $\unitnormalh$ or $\solu_{c,h}= \bbb{id}_{\surfh} - l_{0} \unitnormalh$, too. We therefore assume that the force due to coupling and pressure is given by some function (properly, a $h$ family of functions) $\bbb{k}_{h} : \surfh \times \mbR^3 \to \mbR^3$ that has the same regularity properties as $\bbb{k}$. In particular, $\bbb{k}_{h}$ is Lipschitz continuous in the second argument with the same Lipschitz constant $C_k>0$ independently of $h$. Using \eqref{eq:bijection} we define its lift $\bbb{k}_{h}^\ell : \surf \times \mbR^3 \to \mbR^3$ by $\bbb{k}_{h}^\ell (\bbb{y},\bbb{a}) = \bbb{k}_{h}(\bbb{y}_{h},\bbb{a})$, $\bbb{a} \in \mbR^3$. We assume that $\bbb{k}_{h}$ is an approximation of $\bbb{k}$ in the following sense: There is a constant $C>0$ independent of $h$ such that for all $\bbb{a} \in \mbR^3$
\begin{equation} \label{eq:ass_cons}
 \| \bbb{k}(\cdot,\bbb{a}) - \bbb{k}_{h}^\ell(\cdot,\bbb{a}) \|_{L^\infty(\surf)} \leq C (1 + |\bbb{a}|) h.
\end{equation}
With regards to the specific model \eqref{eq:fcp_eps}, the approximation
\begin{multline*}
 \bbb{k}_{h}(\bbb{y}_{h},\soluh) = - k_{coupling,\eps}(|\soluh - \solu_{c,h}(\bbb{y}_{h})|) \Big{(} (\soluh - \solu_{c,h}(\bbb{y}_{h})) - l_{0} \frac{(\soluh - \solu_{c,h}(\bbb{y}_{h}))}{|\soluh - \solu_{c,h}(\bbb{y}_{h})| + \eps} \Big{)} \\
 + \frac{p_{0}}{V_{h}(\soluh) + \eps} \unitnormalh(\bbb{y}_{h})
\end{multline*}
with 
\[
 V_{h}(\soluh) = \max \Big{\{} \int_{\surfh} \frac{1}{3} \soluh \cdot \unitnormalh d\sigma_{h}, \, 0 \Big{\}}
\]
satisfies the assumptions. 

\begin{problem} \label{prob:semidiscrete}
Find $\soluh,\solwh \in C^1(0,T;S_{h}^3) \times C^0(0,T;S_{h}^3)$ such that for all $\tfuh,\tfwh \in S_{h}^3$ and all $t \in (0,T)$
\begin{align}
 m_{h}(\pd_t \soluh,\tfuh) + s_{h}(\solwh,\tfuh) + s_{h}(\psi';\soluh,\tfuh) + m_{h}(\bbb{k}_{h}(\soluh),\tfuh) &= 0, \label{eq:semidis1} \\
 s_{h}(\soluh,\tfwh) - m_{h}(\solwh,\tfwh) &= 0, \label{eq:semidis2} 
\end{align}
and such that $\soluh(\cdot,0) = \bbb{id}_\surfh$. 
\end{problem}

In the next subsection we will show the following main result: 

\begin{theorem} \label{theo:main}
The semi-discrete problems \ref{prob:semidiscrete} are well-posed for all $h>0$ small enough. As $h \to 0$ the lifted solutions $(\soluhl,\solwhl) = (\soluh^\ell,\solwh^\ell)$ converge to some functions $(\solu,\solw)$ that uniquely solve the abstract variational problem \ref{prob:weak} and satisfy 
\begin{equation} \label{eq:stability}
 \| \solu \|_{L^\infty_{H^1}}^2 + \| \solw \|_{L^2_{H^1}}^2 \leq C 
\end{equation}
with some $C>0$ that depends on data only. 
\end{theorem}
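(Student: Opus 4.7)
The plan is to first solve Problem \ref{prob:semidiscrete} as a finite dimensional ODE, then extract a convergent subsequence of lifts by weak compactness and Aubin--Lions, and finally identify the unique solution of Problem \ref{prob:weak}; the stability estimate \eqref{eq:stability} will follow from weak lower semicontinuity applied to the uniform semi-discrete bounds. For the semi-discrete step, since $m_{h}$ is positive definite on $S_{h}^{3}$, equation \eqref{eq:semidis2} expresses $\solwh$ as a bounded linear function of $\soluh$; substituting into \eqref{eq:semidis1} yields an ODE in $\soluh$ whose right-hand side is Lipschitz by \eqref{eq:kLipschitz} and \eqref{eq:psipLipschitz}, and Picard--Lindel\"of gives local existence and uniqueness. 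The key a priori energy estimates come from: (i) testing \eqref{eq:semidis2} against a constant, which gives $\int_{\surfh}\solwh\,d\sigma_{h}=0$ and hence Poincar\'e for $\solwh$; (ii) testing \eqref{eq:semidis2} with $\tfwh=\pd_{t}\soluh$ to obtain $m_{h}(\pd_{t}\soluh,\solwh)=\frac{1}{2}\frac{d}{dt}\|\snabh\soluh\|_{L^{2}(\surfh)}^{2}$; and (iii) inserting this identity into \eqref{eq:semidis1} tested with $\tfuh=\solwh$. Absorbing $s_{h}(\psi';\soluh,\solwh)$ and $m_{h}(\bbb{k}_{h}(\soluh),\solwh)$ by Young's inequality together with the linear growth consequences of \eqref{eq:kLipschitz} and \eqref{eq:psipLipschitz} and Poincar\'e on $\solwh$, and complementing with $\tfuh=\soluh$ in \eqref{eq:semidis1} to control $\|\soluh\|_{L^{2}}$, Gronwall delivers $\|\soluh\|_{L^{\infty}_{H^{1}}}+\|\solwh\|_{L^{2}_{H^{1}}}\le C$ uniformly in $h$. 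A further test $\tfuh=\pd_{t}\soluh$, exploiting that $\psi$ is a potential for $\psi'$, yields $\|\pd_{t}\soluh\|_{L^{2}(0,T;L^{2}(\surfh))}\le C$. These bounds prevent finite-time blow-up, giving global existence on $[0,T]$, and they transfer to the lifts $\soluhl,\solwhl$ via Lemmas~\ref{lem:geomest}--\ref{lem:lifting}.

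Extracting a (not relabelled) subsequence, $\soluhl \rightharpoonup^{\ast} \solu$ in $L^{\infty}_{H^{1}}$, $\solwhl \rightharpoonup \solw$ in $L^{2}_{H^{1}}$, and $\pd_{t}\soluhl \rightharpoonup \pd_{t}\solu$ in $L^{2}(0,T;L^{2}(\surf))$. Substituting $\tfuh=\Pi_{h}\tfu$ and $\tfwh=\Pi_{h}\tfw$ for $\tfu,\tfw\in H^{1}(\surf)$ in the semi-discrete equations, Lemma~\ref{lem:Ritz} supplies strong convergence of the Ritz projections and Lemma~\ref{lem:compforms} controls the geometric defects, so all linear terms pass to the limit. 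The coupling nonlinearity is handled by Aubin--Lions (compact embedding $H^{1}\hookrightarrow\hookrightarrow L^{2}$ combined with the $L^{\infty}_{H^{1}}\cap H^{1}(0,T;L^{2})$ bounds), yielding strong convergence $\soluhl\to\solu$ in $L^{2}(0,T;L^{2}(\surf))$; Lipschitz continuity \eqref{eq:kLipschitz} together with the consistency estimate \eqref{eq:ass_cons} then pass $m_{h}(\bbb{k}_{h}(\soluh),\Pi_{h}\tfu)$ to $m(\bbb{k}(\solu),\tfu)$. \emph{The hard part} is passing to the limit in the tension nonlinearity $s_{h}(\psi';\soluh,\Pi_{h}\tfu)$: by \eqref{eq:psipLipschitz} this reduces to strong convergence of $\snab\soluhl$ in $L^{2}(0,T;L^{2}(\surf))$, which is not supplied by Aubin--Lions applied to $\soluhl$ alone. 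To obtain it, the plan is to exploit the structural identity $-\Delta_{\surf}\soluhl\approx\solwhl$, with a defect controlled via Lemma~\ref{lem:compforms}, by introducing the auxiliary $\tilde{\solu}^{h}\in L^{2}(0,T;H^{2}(\surf))$ defined as the zero-mean solution of $-\Delta_{\surf}\tilde{\solu}^{h}=\solwhl$ on $\surf$; elliptic regularity on the smooth closed surface then yields a uniform bound $\|\tilde{\solu}^{h}\|_{L^{2}(0,T;H^{2}(\surf))}\le C$. Compactness of $(-\Delta_{\surf})^{-1}\colon L^{2}\to L^{2}$ combined with dominated convergence in time gives strong convergence of $\tilde{\solu}^{h}$ in $L^{2}(0,T;L^{2}(\surf))$, and the interpolation inequality $\|v\|_{H^{1}}^{2}\le C(\|v\|_{L^{2}}^{2}+\|v\|_{L^{2}}\|v\|_{H^{2}})$ upgrades this to strong convergence in $L^{2}(0,T;H^{1}(\surf))$. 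The discrepancy $\soluhl-\tilde{\solu}^{h}$ tends to zero in $L^{2}(0,T;H^{1}(\surf))$ by standard Galerkin orthogonality arguments based on Lemmas~\ref{lem:compforms} and \ref{lem:Ritz} applied to \eqref{eq:semidis2}, so $\snab\soluhl\to\snab\solu$ strongly in $L^{2}(0,T;L^{2}(\surf))$ as required.

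Having identified the limit as a solution of Problem~\ref{prob:weak}, uniqueness is then proved by testing the difference $(\delta\solu,\delta\solw)$ of two weak solutions in the same spirit as in the a priori step: the identity $m(\pd_{t}\delta\solu,\delta\solw)=\frac{1}{2}\frac{d}{dt}\|\snab\delta\solu\|_{L^{2}}^{2}$ obtained from differentiating \eqref{eq:weak2} with $\tfw=\pd_{t}\delta\solu$, combined with \eqref{eq:weak1} tested with $\delta\solw$ and the Lipschitz bounds \eqref{eq:kLipschitz}--\eqref{eq:psipLipschitz} (using Poincar\'e on the zero-mean $\delta\solw$), leads to a Gronwall inequality for $\|\snab\delta\solu\|_{L^{2}}^{2}$. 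Since $\delta\solu(\cdot,0)=0$, one concludes $\delta\solu\equiv 0$ and therefore $\delta\solw\equiv 0$. Uniqueness implies that the whole family $(\soluhl,\solwhl)$ converges, not just a subsequence, and \eqref{eq:stability} follows immediately from weak lower semicontinuity of the $L^{\infty}_{H^{1}}$ and $L^{2}_{H^{1}}$ norms applied to the uniform semi-discrete bounds derived in the first step.
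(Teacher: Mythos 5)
Your overall skeleton matches the paper's: ODE existence via Picard--Lindel\"of, energy estimates by testing with $\soluh$, $\solwh$, $\pd_t\soluh$ and using \eqref{eq:semidis2} to pass between stiffness and mass norms, lifting via Lemmas~\ref{lem:geomest}--\ref{lem:lifting}, weak compactness, identification of the limit, and uniqueness by a Gronwall argument in the spirit of the error analysis. Your estimates are a minor variant (using zero mean of $\solwh$ and Poincar\'e in place of the paper's direct use of \eqref{eq:semidis2} with $\tfwh = \soluh$ or $\tfwh=\solwh$), and the treatment of the linear terms and the coupling term is essentially the paper's argument. You also correctly identify that the crux is strong $L^2_{H^1}$ convergence of $\soluhl$, and that Aubin--Lions alone on $\soluhl$ does not deliver it.

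However, your proposed route to this strong gradient convergence has a genuine gap. You define $\tilde{\solu}^h = (-\Delta_\surf)^{-1}\solwhl$ and assert that compactness of $(-\Delta_\surf)^{-1}:L^2(\surf)\to L^2(\surf)$ together with ``dominated convergence in time'' upgrades the weak $L^2(0,T;L^2)$ convergence of $\solwhl$ to strong $L^2(0,T;L^2)$ convergence of $\tilde{\solu}^h$. This does not follow: the operator $\mathrm{Id}_{L^2(0,T)}\otimes(-\Delta_\surf)^{-1}$ on $L^2(0,T;L^2(\surf))$ is not compact, and weak convergence in $L^2(0,T;L^2)$ does not give pointwise-in-time convergence to which dominated convergence could be applied (take $\solwhl(t)=f(t/h)\,g$ with $g\in L^2(\surf)$ fixed and $f$ oscillating: $\tilde{\solu}^h = f(t/h)(-\Delta_\surf)^{-1}g$ converges only weakly). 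To rescue the auxiliary-problem route you would need a bound on $\pd_t\solwhl$ in a suitable dual space for an Aubin--Lions argument applied to $\tilde{\solu}^h$, but the estimates you derived do not control $\pd_t\solwh$; the identity $m_h(\pd_t\solwh,\tfwh)=s_h(\pd_t\soluh,\tfwh)$ only helps if $\snabh\pd_t\soluh$ is bounded in $L^2$, which you do not have. There is also a secondary compatibility issue ($\solwhl$ is zero-mean against $\mu_h\,d\sigma$, not $d\sigma$), and the claim that $\soluhl-\tilde{\solu}^h\to 0$ in $L^2_{H^1}$ ``by standard Galerkin orthogonality'' hides real work because $\tilde{\solu}^h$ solves a problem on $\surf$ while \eqref{eq:semidis2} lives on $\surfh$ with geometric defects.

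The paper sidesteps all of this with a cheaper trick you should note: writing
$\|\snab(\solu-\soluhl)\|_{L^2_{L^2}}^2 = \int_0^T s(\solu-\soluhl,\solu-\pi_h(\solu))\,dt + \int_0^T s(\solu-\soluhl,\pi_h(\solu)-\soluhl)\,dt$,
the first term vanishes by Ritz approximation and weak convergence, while in the second term one substitutes the already-established identity \eqref{eq:estimhilf2} (i.e.\ the limit equation \eqref{eq:weak2}) and the discrete relation \eqref{eq:semidis2}. This converts all the $s$- and $s_h$-pairings into $m$- and $m_h$-pairings, for which the strong $L^2_{L^2}$ convergence of $\soluhl$ (and of $\pi_h(\solu)$) and weak convergence of $\solwhl$, together with Lemma~\ref{lem:compforms}, are already sufficient. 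No auxiliary elliptic problem, no extra time regularity. I recommend you replace your argument for $K$ by this device; everything else in your proposal is sound.
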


\subsection{Proof of Theorem \ref{theo:main}} 

We generally follow the procedure in \cite{EllRan_NumerMath_2015}. Essential differences consist in the approximation of the data $\bbb{k}$ by $\bbb{k}_{h}$ and the non-linear function $\psi'$ of the gradient. To deal with the former, the consistency assumption \eqref{eq:ass_cons} will turn out sufficient, whilst for the latter we will exploit the relations \eqref{eq:weak2} and \eqref{eq:semidis2} to show strong convergence of the gradient of the deformation. 

Short time existence for \eqref{eq:semidis1}, \eqref{eq:semidis2} is straightforward to show. Estimates are now derived that are, at first, only valid at times of existence but then in the usual way can be used to show existence over the whole time interval by a continuation argument. We therefore state these estimates directly on the whole time interval. We also use the standard notion of $C>0$ for a generic constant that depends on the problem data but not on any solution, and which may change from line to line. 

Testing with $\tfuh = \soluh$ in \eqref{eq:semidis1} and $\tfwh = \solwh$ in \eqref{eq:semidis2} and subtracting these identities yields that 
\begin{align}
 \frac{1}{2} \frac{d}{dt} \| \soluh \|_{L^2}^2 + \| \solwh \|_{L^2}^2 & = - s_{h}(\psi';\soluh,\tfuh) - m_{h}(\bbb{k}_{h}(\soluh),\tfuh) \nonumber \\
 & \leq C \big{(} \| \snabh \soluh \|_{L^2}^2 + \| \soluh \|_{L^2}^2 + 1 \big{)}. \label{eq:estimhilf1}
\end{align}
Here and in the following we use the Lipschitz continuity of $\psi'$ and $\bbb{k}_{h}$, which implies linear growth (see \eqref{eq:psipLipschitz}, \eqref{eq:kLipschitz} and the comments after). Choosing $\tfwh = \soluh$ in \eqref{eq:semidis2} we see that 
\[
 \| \snabh \soluh \|_{L^2}^2 = s_{h} (\soluh, \soluh) = m_{h} (\solwh, \soluh) \leq \frac{\eps}{2} \| \solwh \|_{L^2}^2 + \frac{1}{2\eps} \| \soluh \|_{L^2}^2, 
\]
for $\eps>0$, and choosing $\eps$ small enough we thus obtain from \eqref{eq:estimhilf1} that 
\[
 \frac{1}{2} \frac{d}{dt} \| \soluh \|_{L^2}^2 + \frac{1}{2} \| \solwh \|_{L^2}^2 \leq C \big{(} \| \soluh \|_{L^2}^2 + 1 \big{)}. 
\]
A Gronwall argument therefore yields the estimate 
\begin{equation} \label{eq:estim1}
 \| \soluh \|_{L^\infty_{L^2}}^2 + \| \solwh \|_{L^2_{L^2}}^2 \leq C. 
\end{equation}

Testing with $\tfuh = \solwh$ in \eqref{eq:semidis1} and $\tfwh = \pd_{t} \soluh$ in \eqref{eq:semidis2} and then adding these equations yields that 
\[
 s_{h}(\pd_{t} \soluh, \soluh) + s_{h}(\solwh, \solwh) = - s_{h}(\psi'; \soluh, \solwh) - m_{h} (\bbb{k}_{h}(\soluh),\solwh). 
\]
With $\tfwh = \solwh$ in \eqref{eq:semidis2} we get for any small $\eps>0$ that 
\[
 \| \solwh \|_{L^2}^2 = m_{h} (\solwh,\solwh) = s_{h}(\soluh, \solwh) \leq \eps \| \snabh \solwh \|_{L^2}^2 + \frac{1}{4\eps} \| \soluh \|_{L^2}^2.
\]
Using the Lipschitz continuity of $\psi'$ and $\bbb{k}_{h}$ again we thus can conclude that 
\begin{align*}
 \frac{1}{2} \frac{d}{dt} & \| \snabh \soluh \|_{L^2}^2 + \| \snabh \solwh \|_{L^2}^2 \\
 & \leq \frac{1}{2} \| \psi'(\snabh \soluh) \|_{L^2}^2 + \frac{1}{2} \| \snabh \solwh \|_{L^2}^2 + \frac{1}{2} \| \bbb{k}_{h}(\soluh) \|_{L^2}^2 + \frac{1}{2} \| \solwh \|_{L^2}^2 \\
 & \leq \frac{1 + \eps}{2} \| \snabh \solwh \|_{L^2}^2 + C \big{(} \| \soluh \|_{L^2}^2 + \| \snabh \soluh \|_{L^2}^2 + 1 \big{)}. 
\end{align*}
Choosing $\eps$ small enough and then applying \eqref{eq:estim1} and a Gronwall argument we obtain the estimate 
\begin{equation} \label{eq:estim2}
 \| \snabh \soluh \|_{L^\infty_{L^2}}^2 + \| \snabh \solwh \|_{L^2_{L^2}}^2 \leq C. 
\end{equation}

Taking the time derivative in \eqref{eq:semidis2} (which also implies that $\pd_{t} \solwh$ exists) yields that $s_{h}(\pd_{t} \soluh, \tfwh) = m_{h}(\pd_{t} \solwh, \tfwh)$. We test this with $\tfwh = \solwh$ and subtract it from \eqref{eq:semidis1} with $\tfuh = \pd_{t} \soluh$ to obtain that 
\[
 m_{h}(\pd_{t} \soluh, \pd_{t} \soluh) + m_{h}(\pd_{t} \solwh, \solwh) + s_{h}(\psi'; \soluh, \pd_{t} \soluh) + m_{h}(\bbb{k}_{h}(\soluh), \pd_{t} \soluh) = 0. 
\]
Noting that 
\[ 
 s_{h}(\psi'; \soluh, \pd_{t} \soluh) = \int_{\surfh} \psi'(\snabh \soluh) : \pd_{t} \snabh \soluh d\sigma_{h} = \int_{\surfh} \frac{d}{dt} \psi(\snabh \soluh) d\sigma_{h}
\]
and using the Lipschitz continuity of $\bbb{k}_{h}$ again we see that 
\[
 \| \pd_{t} \soluh \|_{L^2}^2 + \frac{1}{2} \frac{d}{dt} \| \solwh \|_{L^2}^2 + \frac{d}{dt} \Big{(} \int_{\surfh} \psi(\snabh \soluh) d\sigma_{h} \Big{)} \leq C (\| \soluh \|_{L^2}^2 + 1) + \frac{1}{2} \| \pd_{t} \soluh \|_{L^2}^2. 
\]
Therefore, with \eqref{eq:estim1} we obtain the estimate 
\begin{equation} \label{eq:estim3}
 \| \pd_{t} \soluh \|_{L^2_{L^2}}^2 + \| \solwh \|_{L^\infty_{L^2}}^2 + \sup_{t \in [0,T]} \int_{\surfh} \psi(\snabh \soluh) d\sigma_{h} \leq C. 
\end{equation}

These estimates \eqref{eq:estim1}--\eqref{eq:estim3} are now lifted from $\surfh$ to $\surf$. We can then apply compactness arguments to deduce the existence of limits $(\solu,\solw)$, which we will show to satisfy Problem \ref{prob:weak}. As a first step, the stability estimate \eqref{eq:stability} will be derived. Using Lemma \ref{lem:lifting} the lifted solutions satisfy the estimates 
\begin{align}
 \| \soluhl \|_{L^\infty_{H^1}}^2 + \| \solwhl \|_{L^2_{H^1}}^2 &\leq C, \label{eq:estiml2} \\
 \| \pd_{t} \soluhl \|_{L^2_{L^2}}^2 + \| \solwhl \|_{L^\infty_{L^2}}^2 &\leq C. \label{eq:estiml3}
\end{align}
Hence, there are functions $\solu \in L^2_{H^1}$ with $\pd_{t} \in L^2_{L^2}$ and $\solw \in L^2_{H^1}$ such that for a subsequence as $h \to 0$ 
\begin{align}
 \soluhl &\rightharpoonup \solu & \quad & \mbox{in } L^2_{H^1}, \qquad & \pd_t \soluhl &\rightharpoonup \pd_t \solu & \quad & \mbox{in } L^2_{L^2}, \label{eq:convsol1} \\
 \soluhl &\to \solu & \quad & \mbox{in } L^2_{L^2} \mbox{ and a.e.,} \qquad & \solwhl &\rightharpoonup \solw & \quad & \mbox{in } L^2_{H^1}, \label{eq:convsol2} 
\end{align}
and these limits also satisfy \eqref{eq:estiml2} and \eqref{eq:estiml3} and, thus, the stability estimate \eqref{eq:stability}. 

Let us now show that $(\solu,\solw)$ satisfies \eqref{eq:weak2}. For any $\bbb{\eta} \in L^2_{H^1}$ let $\bbb{H}_{h} = \Pi_{h}(\bbb{\eta})$ denote its Ritz projection with the lift $\bbb{\eta}_{h} = \pi_{h}(\bbb{\eta})$. Then $s_{h}(\soluh,\bbb{H}_{h}) = m_{h}(\solwh,\bbb{H}_{h})$, whence 
\begin{multline*}
 \int_0^T s(\solu,\bbb{\eta}_{h}) - m(\solw,\bbb{\eta}_{h}) dt 
= \int_0^T \big{(} s(\solu,\bbb{\eta}_{h}) - s(\soluhl,\bbb{\eta}_{h}) \big{)} dt 
+ \int_0^T \big{(} s(\soluhl,\bbb{\eta}_{h}) - s_{h}(\soluh,\bbb{H}_{h}) \big{)} dt \\
+ \int_0^T \big{(} m_{h}(\solwh,\bbb{H}_{h}) - m(\solwhl,\bbb{\eta}_{h}) \big{)} dt + \int_0^T \big{(} m(\solwhl,\bbb{\eta}_{h}) - m(\solw,\bbb{\eta}_{h}) \big{)} dt =: J_{1}+J_{2}+J_{3}+J_{4}.
\end{multline*}
By the properties of the Ritz projection (Lemma \ref{lem:Ritz}) we have that $\bbb{\eta}_{h} = \pi_{h}(\bbb{\eta}) \to \bbb{\eta}$ in $L^2_{H^1}$. Thanks to \eqref{eq:convsol1} we thus have that $J_{1} \to 0$ as $h \to 0$, and similarly $J_{4} \to 0$ thanks to \eqref{eq:convsol2}. Lemma \ref{lem:compforms} together with the estimates \eqref{eq:estim2} and \eqref{eq:estim3} ensures that $J_{2} \to 0$ and $J_{3} \to 0$ as $h \to 0$. Therefore, $(\solu,\solw)$ satisfies the following identity, which implies \eqref{eq:weak2}: 
\begin{equation} \label{eq:estimhilf2}
 \int_0^T \big{(} s(\solu,\bbb{\eta}) - m(\solw,\bbb{\eta}) \big{)} dt = 0 \quad \forall \bbb{\eta} \in L^2_{H^1}.
\end{equation}

Next, we show strong convergence of $\snab \soluhl$. We note that 
\[
 \| \snab (\solu - \soluhl) \|_{L^2_{L^2}}^2 = \int_0^T s(\solu - \soluhl,\solu - \pi_{h}(\solu)) dt + \int_0^T s(\solu -\soluhl, \pi_{h}(\solu) - \soluhl) dt =: K_{1}+K_{2}. 
\]
Using again Lemma \ref{lem:Ritz} we see that $\pi_{h}(\solu) \to \solu$ in $L^2_{H^1}$, and with \eqref{eq:convsol1} this implies that $K_{1} \to 0$. Regarding the second term we note that thanks to \eqref{eq:estimhilf2} and \eqref{eq:semidis2} 
\begin{align*}
 K_{2} &= \int_0^T \big{(} m(\solw,\pi_{h}(\solu) - \soluhl) - m(\solwhl,\pi_{h}(\solu) - \soluhl) \big{)} dt \\
 & \quad + \int_0^T \big{(} m(\solwhl,\pi_{h}(\solu) - \soluhl) - m_{h}(\solwh,\Pi_{h}(\solu) - \soluh) \big{)} dt \\
 & \quad \quad + \int_0^T \big{(} s_{h}(\soluh,\Pi_{h}(\solu) - \soluh) - s(\soluhl,\pi_{h}(\solu) - \soluhl) \big{)} dt = K_{21}+K_{22}+K_{23}. 
\end{align*}
As both $\pi_{h}(\solu) \to \solu$ and $\soluhl \to \solu$ by \eqref{eq:convsol2} we see that $\pi_{h}(\solu) - \soluhl \to 0$ in $L^2_{L^2}$ as $h \to 0$. With $\solwhl \rightharpoonup \solw$ in the same space we obtain that $K_{21} \to 0$. From the definition and properties of the Ritz projection it easily follows that $\| \Pi_{h}(\xi) \|_{H^1} \leq C \| \xi \|_{H^1}$ with some $C>0$ independent of $h$ and $\xi \in H^1(\surf)$. The stability estimate \eqref{eq:stability}, which is already proved, and the estimates \eqref{eq:estim1} and \eqref{eq:estim2} therefore yield that $\| \Pi_{h}(\solu) - \soluh \|_{H^1}$ is uniformly bounded in $h$. Using \eqref{eq:estim1} and \eqref{eq:estim2} again for $\solwh$ and Lemma \ref{lem:compforms} we obtain that $K_{22} \to 0$ and $K_{23} \to 0$ as $h \to 0$. This finally shows that 
\begin{equation} \label{eq:convsol3}
 \soluhl \to \solu \quad \mbox{in } L^2_{H^1} \mbox{ and a.e.}
\end{equation}

To conclude the proof of Theorem \ref{theo:main} we need to show that $(\solu,\solw)$ satisfies \eqref{eq:weak1}. For any $\tfu \in L^2_{H^1}$ let $\tfuh = \Pi_{h}(\tfu)$ be its Ritz projection with lift $\tfuhl = \pi_{h}(\tfu)$. Then 
\begin{align}
 \int_0^T & \big{(} m(\pd_{t} \solu,\tfu) - m_{h}(\pd_{t} \soluh,\tfuh) \big{)} dt \nonumber \\
 = & \, \int_0^T \big{(} m(\pd_{t} \solu,\tfu) - m(\pd_{t} \soluhl,\tfu) \big{)} dt + \int_0^T \big{(} m(\pd_{t} \soluhl,\tfu) - m(\pd_{t} \soluhl,\tfuhl) \big{)} dt \nonumber \\
 & + \int_0^T \big{(} m(\pd_{t} \soluhl,\tfuhl) - m_{h}(\pd_{t} \soluh,\tfuh) \big{)} dt \quad =: L_{1}+L_{2}+L_{3}. \label{eq:splitterm1}
\end{align}
Thanks to \eqref{eq:convsol2} we have that $L_{1} \to 0$ as $h \to 0$. Lemma \ref{lem:Ritz} on the Ritz projection ensures that $L_{2} \to 0$. It also ensures that $\tfuh$ is uniformly bounded in $h$, and with Lemma \ref{lem:compforms} and \eqref{eq:estim3} we obtain that $L_{3} \to 0$. Altogether 
\begin{equation} \label{eq:convterm1}
 \int_0^T m_{h}(\pd_{t} \soluh,\tfuh) dt \to \int_0^T m(\pd_{t} \solu,\tfu) dt. 
\end{equation}
Analogously one can show that 
\begin{equation} \label{eq:convterm2}
 \int_0^T s_{h}(\solwh,\tfuh) dt \to \int_0^T s(\solw,\tfu) dt. 
\end{equation}

Next, we can write 
\begin{align}
 \int_0^T \big{(} & s(\psi';\solu,\tfu) - s_{h}(\psi';\soluh,\tfuh) \big{)} dt \notag \\
 & = \int_0^T \int_{\surf} \big{(} \psi'(\snab \solu) : \snab \tfu - \psi'(\snab \soluhl) : \snab \tfu \big{)} d\sigma dt \notag \\
 & \quad + \int_0^T \int_{\surf} \big{(} \psi'(\snab \soluhl) : \snab \tfu - \psi'(\snab \soluhl) : \snab \tfuhl \big{)} d\sigma dt \notag \\
 & \quad + \int_0^T \Big{(} \int_{\surf} \psi'(\snab \soluhl) : \snab \tfuhl d\sigma - \int_{\surfh} \psi'(\snabh \soluh) : \snabh \tfuh d\sigma_{h} \Big{)} dt \notag \\
 & =: M_{1}+M_{2}+M_{3}. \label{eq:splitterm3}
\end{align}
Thanks to \eqref{eq:convsol3} and the Lipschitz continuity of $\psi'$ we have that $\psi'(\snab \soluhl) \to \psi'(\snab \solu)$ in $L^2_{L^2}$ and almost everywhere, whence $M_{1} \to 0$ as $h \to 0$. For the second term we observe that 
\begin{align*}
 M_{2} &\leq \int_0^T \| \psi'(\snab \soluhl) \|_{L^2(\surf)} \| \snab \tfu - \snab \tfuhl \|_{L^2(\surf)} dt \\
 &\leq \int_0^T C \big{(} \| \snab \soluhl \|_{L^2(\surf)} + 1 \big{)} \| \tfu - \tfuhl \|_{H^1(\surf)} dt \quad \to 0
\end{align*}
thanks to the estimate \eqref{eq:estim2} and Lemma \ref{lem:Ritz}. In the last term we lift the second integral to $\surf$ (recall \eqref{eq:transint} and \eqref{eq:transdiff} for the transformation of the derivative): 
\begin{align}
 M_{3} &= \int_0^T \Big{(} \int_{\surf} \psi'(\snab \soluhl) : \snab \tfuhl d\sigma - \int_{\surf} \psi'(\bbb{Q}_{h} \snab \soluhl) : \bbb{Q}_{h} \snab \tfuhl \mu_{h} d\sigma \Big{)} dt \notag \\
 &= \int_0^T \int_{\surf} \big{(} \psi'(\snab \soluhl) - \psi'(\bbb{Q}_{h} \snab \soluhl) \big{)} : \snab \tfuhl d\sigma dt \notag \\
 & \quad + \int_0^T \int_{\surf} \psi'(\bbb{Q}_{h} \snab \soluhl) : \big{(} \bbb{P} - \mu_{h} \bbb{Q}_{h} \big{)} \snab \tfuhl d\sigma dt. \label{eq:estimM3a}
\end{align}
We can now apply the Lipschitz continuity of $\psi'$ and the geometric error estimates in Lemma \ref{lem:geomest} (which imply that $\| \bbb{Q}_{h} \|_{L^\infty(\surf)}$ is uniformly bounded in $h$) to obtain that 
\begin{align}
 |M_{3}| &\leq \int_0^T C_{\psi} \big{|} \snab \soluhl - \bbb{Q}_{h} \snab \soluhl \big{|} \, | \snab \tfuhl | dt \notag \\
 & \quad + \int_0^T C \big{(} | \bbb{Q}_{h} \snab \soluhl | + 1 \big{)} \big{(} | \bbb{P} - \bbb{Q}_{h} | + | \bbb{Q}_{h} | \big{|} 1 - \mu_{h} \big{|} \big{)} | \snab \tfuhl | dt \notag \\
 &\leq C_{\psi} \| \bbb{P} - \bbb{Q}_{h} \|_{L^\infty(\surf)} \| \snab \soluhl \|_{L^2_{L^2}} \| \snab \tfuhl \|_{L^2_{L^2}} \notag \\
 & \quad + C \big{(} \| \bbb{Q}_{h} \|_{L^\infty(\surf)} + 1 \big{)} \big{(} \| \bbb{P} - \bbb{Q}_{h} \|_{L^\infty(\surf)} + \| 1 - \mu_{h} \|_{L^\infty(\surf)} \big{)} \| \snab \soluhl \|_{L^2_{L^2}} \| \snab \tfuhl \|_{L^2_{L^2}} \notag \\
 & \leq C h \| \snab \soluhl \|_{L^2_{L^2}} \| \snab \tfuhl \|_{L^2_{L^2}}. \label{eq:estimM3b}
\end{align}
Using estimate \eqref{eq:estiml2} and that also $\| \tfuhl \|_{L^2_{H^1}} \leq C \| \tfu \|_{L^2_{H^1}}$ is uniformly bounded (follows from Lemma \ref{lem:Ritz}) we see that $M_{3} \to 0$, and we can conclude that 
\begin{equation} \label{eq:convterm3}
 \int_0^T s_{h}(\psi';\soluh,\tfuh) dt \to \int_0^T s(\psi';\solu,\tfu) dt. 
\end{equation}

For the last term in \eqref{eq:weak1} we note that 
\begin{multline}
 \int_0^T \big{(} m(\bbb{k}(\solu), \tfu) - m_{h}(\bbb{k}_{h}(\soluh), \tfuh) \big{)} dt 
 = \int_0^T \big{(} m(\bbb{k}(\solu), \tfu) - m(\bbb{k}(\soluhl), \tfu) \big{)} dt \\
 + \int_0^T m(\bbb{k}(\soluhl), \tfu - \tfuhl) dt + \int_0^T \big{(} m(\bbb{k}(\soluhl), \tfuhl) - m_{h}(\bbb{k}_{h}(\soluh), \tfuh) \big{)} dt =: N_{1}+N_{2}+N_{3}. \label{eq:splitterm4}
\end{multline}
Thanks to \eqref{eq:convsol2} and the Lipschitz continuity of $\bbb{k}$ we have that $\bbb{k}(\soluhl) \to \bbb{k}(\solu)$ in $L^2_{L^2}$ and almost everywhere, so that $N_{1} \to 0$ as $h \to 0$. The second term converges to zero thanks to $\tfuhl \to \tfu$ in $L^2_{L^2}$. Regarding $N_{3}$, we lift the second integral to $\surf$: 
\begin{align}
 N_{3} &= \int_0^T \Big{(} \int_\surf \bbb{k}(\soluhl) \cdot \tfuhl d\sigma - \int_{\surf} \bbb{k}_{h}^\ell(\soluhl) \cdot \tfuhl \mu_{h} d\sigma \Big{)} dt \notag \\
 &= \int_0^T \int_{\surf} \big{(} \bbb{k}(\soluhl) - \bbb{k}_{h}^\ell(\soluhl) \big{)} \cdot \tfuhl \notag d\sigma dt \\
 & \quad + \int_0^T \int_{\surf} (1 - \mu_{h}) \bbb{k}_{h}^\ell(\soluhl) \cdot \tfuhl d\sigma dt. \label{eq:estimN3a}
\end{align}
Using now the consistency \eqref{eq:ass_cons} of the approximation of $\bbb{k}$ by $\bbb{k}_{h}$, the Lipschitz continuity of $\bbb{k}_{h}$, and the geometric error estimates in Lemma \eqref{lem:geomest} we obtain that 
\begin{align}
 |N_{3}| &\leq \int_0^T \Big{(} \int_{\surf} C h (1 + |\soluhl|) |\tfuhl| d\sigma \Big{)} dt + \int_0^T \Big{(} \| 1 - \mu_{h} \|_{L^\infty(\surf)} \int_{\surf} C (|\soluhl| + 1) |\tfuhl| d\sigma \Big{)} dt \notag \\
 & \leq C h \big{(} 1 + \| \soluhl \|_{L^2_{L^2}} \big{)} \| \tfuhl \|_{L^2_{L^2}} \quad \to 0 \label{eq:estimN3b}
\end{align}
using estimate \eqref{eq:estiml2} and that also $\| \tfuhl \|_{L^2_{L^2}} \leq C \| \tfu \|_{L^2_{L^2}}$ is uniformly bounded. Altogether  
\begin{equation} \label{eq:convterm4}
 \int_0^T m_{h}(\bbb{k}_{h}(\soluh),\tfuh) dt \to \int_0^T m(\bbb{k}(\solu),\tfu) dt. 
\end{equation}

The convergence results \eqref{eq:convterm1}, \eqref{eq:convterm2}, \eqref{eq:convterm3}, and \eqref{eq:convterm4} show that $(\solu,\solw)$ satisfies \eqref{eq:weak1}, which is the limit of \eqref{eq:semidis1} as $h \to 0$. 

In the next section we show error estimates. The same techniques can be used to show uniqueness of the solution $(\solu,\solw)$ to Problem \ref{prob:weak}, whence we omit the details. This concludes the proof of Theorem \ref{theo:main}.

\subsection{Error estimates}

Deriving error estimates is possible when assuming higher regularity of the solution, henceforth: 
\begin{equation} \label{eq:ass_reg}
 \mbox{Assume that } \solu, \pd_{t} \solu, \solw \in L^2_{H^2}.
\end{equation}
We will derive error estimates on the triangulated surfaces and for this purpose us the bijection \eqref{eq:bijection} to \emph{anti-lift} the solution $(\solu,\solw)$ to $\surfh$: 
\[
 \solu^{-\ell}, \solw^{-\ell} : \surfh \to \mbR^3, \quad \solu^{-\ell}(\bbb{y}_{h}) = \solu(\bbb{y}), \, \solw^{-\ell}(\bbb{y}_{h}) = \solw(\bbb{y}). 
\]
We use the Ritz projection to split the errors into a projection error and a discrete error:
\begin{alignat*}{3}
 \solu^{-\ell} - \soluh &= \big{(} \solu^{-\ell} - \Pi_{h}(\solu) \big{)} + \big{(} \Pi_{h}(\solu) - \soluh \big{)} & &= \proju + \erru, \\
 \solw^{-\ell} - \solwh &= \big{(} \solw^{-\ell} - \Pi_{h}(\solw) \big{)} + \big{(} \Pi_{h}(\solw) - \solwh \big{)} & &= \projw + \errw.
\end{alignat*}
Thanks to the regularity assumption \eqref{eq:ass_reg}, the properties of the Ritz projection (Lemma \ref{lem:Ritz}), and the properties of the lift (Lemma \ref{lem:lifting}) error bounds for the projection errors are straightforward: 
\begin{alignat}{3}
 \| \proju \|_{L^2_{L^2(\surfh)}} & + h \| \snabh \proju \|_{L^2_{L^2(\surfh)}} & & \leq C h^2 \| \solu \|_{L^2_{H^2}}, \label{eq:proju_est} \\
 \| \projw \|_{L^2_{L^2(\surfh)}} & + h \| \snabh \projw \|_{L^2_{L^2(\surfh)}} & & \leq C h^2 \| \solw \|_{L^2_{H^2}}. \label{eq:projw_est}
\end{alignat}

To estimate the discrete errors we start by testing \eqref{eq:weak1} with $\tfuhl$, which is the lift of some $\tfuh \in S_{h}^3$, and then subtract \eqref{eq:semidis2} tested with $\tfuh$. Using that $s_{h}(\Pi_{h}(\solu),\tfuh) = s(\solu,\tfuhl)$ by the definition of the Ritz projection this yields that 
\begin{align}
m_{h}(\pd_{t} & \erru, \tfuh) + s_{h}(\errw,\tfuh) \notag \\
& + s_{h}(\psi';\Pi_{h}(\solu),\tfuh) - s_{h}(\psi';\soluh,\tfuh) + m_{h}(\bbb{k}_{h}(\Pi_{h}(\solu)),\tfuh) - m_{h}(\bbb{k}_{h}(\soluh),\tfuh) \notag \\
= & \, \big{(} m_{h}(\pd_{t} \Pi_{h}(\solu),\tfuh) - m(\pd_{t} \solu,\tfuhl) \big{)} \notag \\
& + \big{(} s_{h}(\psi';\Pi_{h}(\solu),\tfuh) - s(\psi';\solu,\tfuhl) \big{)} + \big{(} m_{h}(\bbb{k}_{h}(\Pi_{h}(\solu)),\tfuh) - m(\bbb{k}(\solu),\tfuhl) \big{)} \notag
\\ =: & \, E_t(\tfuh) + E_\psi(\tfuh) + E_k(\tfuh). \label{eq:ErrorEqnA}
\end{align}
Proceeding similarly with \eqref{eq:weak2} and \eqref{eq:semidis2} we obtain that 
\begin{equation}
s_{h}(\erru,\tfwh) - m_{h}(\errw,\tfwh) = m_{h}(\Pi_{h}(\solw),\tfwh) - m(\solw,\tfwhl) =: E_w(\tfwh). \label{eq:ErrorEqnB}
\end{equation}
The error terms satisfy the following estimates: 

\begin{lemma} \label{lem:EBounds}
There is some $C>0$ independent of $h$ (sufficiently small) such that for all $\tfuh,\tfwh \in S_{h}^3$
\begin{align}
|E_t(\tfuh)| \leq & \, C h^2 \| \pd_{t} \solu \|_{H^2(\surf)} \| \tfuh \|_{L^2(\surfh)}, \label{eq:EtRate} \\
|E_\psi(\tfuh)| \leq & \, C h \|\solu \|_{H^2(\surf)} \| \snabh \tfuh \|_{L^2(\surfh)}, \label{eq:EpsiRate} \\
|E_k(\tfuh)| \leq & \, C h \big{(} 1 + \| \solu \|_{H^2(\surf)} \big{)} \| \tfuh \|_{L^2(\surfh)}, \label{eq:EkRate} \\
|E_w(\tfwh)| \leq & \, C h^2 \| \solw \|_{H^2(\surf)} \| \tfwh \|_{L^2(\surfh)}. \label{eq:EwRate}
\end{align}
\end{lemma}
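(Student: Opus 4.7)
The strategy is to add and subtract the lifted Ritz projection $\pi_h(\cdot)$ in each of the four terms, so that every $E$ decomposes into a \emph{geometric consistency} contribution, where discrete and continuous forms applied to the lift are compared, and a \emph{projection approximation} contribution, where the Ritz projection error is estimated. The first is controlled by Lemma \ref{lem:compforms} (and, for the gradient-based terms, by the geometric bounds of Lemma \ref{lem:geomest}), the second by Lemma \ref{lem:Ritz}, which under the regularity \eqref{eq:ass_reg} gives quadratic $L^2$ and linear $H^1$ rates. The equivalence of norms under lifting (Lemma \ref{lem:lifting}) is used throughout to convert between $\surf$- and $\surfh$-norms.

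The estimates \eqref{eq:EtRate} and \eqref{eq:EwRate} for the linear mass terms are analogous. For $E_t$ I write
\[
E_t(\tfuh) = \bigl( m_h(\pd_t \Pi_h(\solu),\tfuh) - m(\pd_t \pi_h(\solu),\tfuhl) \bigr) + m\bigl(\pd_t \pi_h(\solu) - \pd_t \solu,\tfuhl\bigr),
\]
bound the first bracket by $Ch^2\|\pd_t \Pi_h(\solu)\|_{L^2(\surfh)}\|\tfuh\|_{L^2(\surfh)}$ via Lemma \ref{lem:compforms}, and use $L^2$-stability of $\Pi_h$ (a consequence of Lemma \ref{lem:Ritz}) to absorb $\Pi_h$. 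Crucially, since $\Pi_h$ is linear and time-independent, $\pd_t\Pi_h(\solu)=\Pi_h(\pd_t\solu)$, so the second term equals $m(\pi_h(\pd_t\solu)-\pd_t\solu,\tfuhl)$ and is bounded by $Ch^2\|\pd_t\solu\|_{H^2}\|\tfuh\|_{L^2(\surfh)}$ by Lemma \ref{lem:Ritz} applied to $\pd_t\solu$. The term $E_w$ is handled identically with $\solw$ in place of $\pd_t\solu$, yielding \eqref{eq:EwRate}.

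The main obstacle is the nonlinear tension term $E_\psi$. I split
\[
E_\psi(\tfuh) = \bigl( s_h(\psi';\Pi_h(\solu),\tfuh) - s(\psi';\pi_h(\solu),\tfuhl) \bigr) + \bigl( s(\psi';\pi_h(\solu),\tfuhl) - s(\psi';\solu,\tfuhl) \bigr).
\]
The second bracket is bounded by the Lipschitz property \eqref{eq:psipLipschitz} and Lemma \ref{lem:Ritz} as $C_\psi\|\snab(\pi_h(\solu)-\solu)\|_{L^2(\surf)}\|\snab\tfuhl\|_{L^2(\surf)} \leq Ch\|\solu\|_{H^2}\|\snabh \tfuh\|_{L^2(\surfh)}$. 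The first bracket is treated exactly like the term $M_3$ in the previous subsection: lift via \eqref{eq:transint} and \eqref{eq:transdiff} to produce integrands involving $\psi'(\snab\pi_h(\solu)) - \psi'(\bbb{Q}_h\snab\pi_h(\solu))$ and the factor $\bbb{P}-\mu_h\bbb{Q}_h$; combining the Lipschitz continuity of $\psi'$ with $\|\bbb{P}-\bbb{Q}_h\|_{L^\infty}\leq Ch$ and $\|1-\mu_h\|_{L^\infty}\leq Ch^2$ from Lemma \ref{lem:geomest} yields an $O(h)$ contribution scaling with $\|\snab\pi_h(\solu)\|_{L^2} \leq C\|\solu\|_{H^1}$. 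Together these give \eqref{eq:EpsiRate}.

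Finally, for $E_k$ I use the same splitting,
\[
E_k(\tfuh) = \bigl( m_h(\bbb{k}_h(\Pi_h(\solu)),\tfuh) - m(\bbb{k}(\pi_h(\solu)),\tfuhl) \bigr) + m\bigl(\bbb{k}(\pi_h(\solu)) - \bbb{k}(\solu),\tfuhl\bigr),
\]
in which the second term is bounded, via \eqref{eq:kLipschitz} and Lemma \ref{lem:Ritz}, by $Ch^2\|\solu\|_{H^2}\|\tfuh\|_{L^2(\surfh)}$. For the first bracket I lift the discrete mass integral to $\surf$, producing two pieces: one with the factor $\bbb{k}_h^\ell(\pi_h(\solu)) - \bbb{k}(\pi_h(\solu))$, which the data consistency \eqref{eq:ass_cons} controls by $Ch(1+|\pi_h(\solu)|)$, and one with the factor $(1-\mu_h)\bbb{k}(\pi_h(\solu))$, which Lemma \ref{lem:geomest} controls by $Ch^2(1+|\pi_h(\solu)|)$. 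Bounding $\|\pi_h(\solu)\|_{L^2(\surf)}$ by $C\|\solu\|_{H^1}$ and absorbing lower-order terms into $Ch(1+\|\solu\|_{H^2})\|\tfuh\|_{L^2(\surfh)}$ yields \eqref{eq:EkRate}, completing the proposal.
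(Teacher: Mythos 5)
Your proposal follows the same strategy as the paper's proof: split each error term via the Ritz projection $\pi_h$ into a geometric consistency part (controlled by Lemma \ref{lem:compforms}, or for the nonlinear terms by lifting and using Lemma \ref{lem:geomest} with the Lipschitz bounds on $\psi'$ and $\bbb{k}$, plus the data consistency \eqref{eq:ass_cons}) and a projection approximation part (controlled by Lemma \ref{lem:Ritz}), with the same use of commutativity $\pd_t\Pi_h = \Pi_h\pd_t$ for $E_t$ and the analogy $E_w \sim E_t$. The decompositions and the resulting rates match the paper term by term, so the argument is correct.
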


\begin{proof}
To show the estimates, we will frequently apply Lemma \ref{lem:compforms} on the approximation of the bilinear forms, Lemma \ref{lem:lifting} on the stability of the lift, and Lemma \ref{lem:Ritz} on the Ritz projection without explicitly pointing it out for conciseness. 

The Ritz projection commutes with the time derivative thanks to the regularity of $\solu$, whence 
\[
 E_{t}(\tfuh) = \big{(} m_{h}(\pd_{t} \Pi_{h} (\solu),\tfuh) - m(\pd_{t} \pi_{h}(\solu),\tfuhl) \big{)} + \big{(} m(\pi_{h}(\pd_{t} \solu),\tfuhl) - m(\pd_{t} \solu,\tfuhl) \big{)} =: \ttt{L}_{3} + \ttt{L}_{1}. 
\]
The term $\ttt{L}_{3}$ is similar to $L_{3}$ in \eqref{eq:splitterm1} but without the time integral and with $\pi_{h}(\pd_{t} \solu)$ instead of $\soluhl$ and thus can also be estimated similarly: 
\[
 |\ttt{L}_{3}| \leq C h^2 \| \Pi_{h}(\pd_{t} \solu) \|_{L^2(\surfh)} \| \tfuh \|_{L^2(\surfh)} \leq C h^2 \| \pd_{t} \solu \|_{H^2(\surf)} \| \tfuh \|_{L^2(\surfh)}. 
\]
Furthermore, 
\[
 |\ttt{L}_{1}| \leq \| \pi_{h}(\pd_{t} \solu) - \pd_{t} \solu \|_{L^2(\surf)} \| \tfuhl \|_{L^2(\surf)} \leq C h^2 \| \pd_{t} \solu \|_{H^2(\surf)} \| \tfuh \|_{L^2(\surfh)},
\]
which altogether yields \eqref{eq:EtRate}. 

We can also split up $E_{\psi}$: 
\[
 E_{\psi}(\tfuh) = \big{(} s_{h}(\psi';\Pi_{h}(\solu),\tfuh) - s(\psi';\pi_{h}(\solu),\tfuhl) \big{)} + \big{(} s(\psi';\pi_{h}(\solu),\tfuhl) - s(\psi';\solu,\tfuhl) \big{)} =: \ttt{M}_{3} + \ttt{M}_{1}.
\]
The first term $\ttt{M}_{3}$ is similar to the term $M_{3}$ in \eqref{eq:splitterm3}, without the time integral and with $\pi_{h}(\solu)$ instead of $\soluhl$. Following the lines of \eqref{eq:estimM3a} and \eqref{eq:estimM3b} we obtain that 
\[
 |\ttt{M}_{3}| \leq C h \| \snab \pi_{h}(\solu) \|_{L^2(\surf)} \| \snab \tfuhl \|_{L^2(\surf)} \leq C h \| \solu \|_{H^2(\surf)} \| \snabh \tfuh \|_{L^2(\surfh)}. 
\]
Using that $\psi'$ is Lipschitz, the other term is estimated as 
\[
 |\ttt{M}_{1}| \leq C_{\psi} \| \snab \pi_{h}(\solu) - \snab \solu \|_{L^2(\surf)} \| \snab \tfuhl \|_{L^2(\surf)} \leq C h \| \solu \|_{H^2(\surf)} \| \snabh \tfuh \|_{L^2(\surfh)}, 
\]
which together shows \eqref{eq:EpsiRate}. 

For the third estimate we use the splitting 
\[
 E_k(\tfuh) = \big{(} m_{h}(\bbb{k}_{h}(\Pi_{h}(\solu)),\tfuh) - m(\bbb{k}(\pi_{h}(\solu)),\tfuhl) \big{)} + \big{(} m(\bbb{k}(\pi_{h}(\solu)),\tfuhl) - m(\bbb{k}(\solu),\tfuhl) \big{)} =: \ttt{N}_{3} + \ttt{N}_{1}.
\]
Noting and exploiting the similarity of $\ttt{N}_{3}$ with $N_{3}$ in \eqref{eq:splitterm3} we proceed as in \eqref{eq:estimN3a} and \eqref{eq:estimN3b} to obtain that 
\[
 |\ttt{N}_{3}| \leq C h \big{(} 1 + \| \pi_{h}(\solu) \|_{L^2(\surf)} \big{)} \| \tfuhl \|_{L^2(\surf)} \leq C h \big{(} 1 + \| \solu \|_{H^2(\surf)}) \big{)} \| \tfuh \|_{L^2(\surfh)}. 
\]
Furthermore, 
\[
 |\ttt{N}_{1}| \leq C_{k} \| \pi_{h}(\solu) - \solu \|_{L^2(\surf)} \| \tfuhl \|_{L^2(\surf)} \leq C h^2 \| \solu \|_{H^2(\surf)} \| \tfuh \|_{L^2(\surfh)}
\]
which finally yields the estimate \eqref{eq:EkRate}. 

The last estimate \eqref{eq:EwRate} can be proved analogously to \eqref{eq:EtRate}, which concludes the proof of Lemma \ref{lem:EBounds}. 
\end{proof}

With these estimates we can derive the following estimates for the error: 

\begin{corollary} \label{cor:errestim}
Assume that $(\solu,\solw)$ solves Problem \ref{prob:weak} and satisfies $\solu,\pd_{t} \solu, \solw \in L_{H^2(\surf)}^2$. For all sufficiently small $h$ the solution $(\soluh,\solwh)$ of Problem \ref{prob:semidiscrete} satisfies 
\begin{align*}
\| \solu^{-l} - \soluh \| _{L^{\infty}_{L^2(\surfh)}}^2 + \| \solw^{-l} - \solwh \|_{L^2_{L^2(\surfh)}}^2 + \| \snabh(\solu^{-l} - \soluh) \| _{L^2_{L^2(\surfh)}}^2 \leq C h^2
\end{align*}
with a constant $C>0$ independent of $h$. 
\end{corollary}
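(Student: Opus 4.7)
The plan is to decompose the total error as $\solu^{-\ell} - \soluh = \proju + \erru$ and $\solw^{-\ell} - \solwh = \projw + \errw$. Since the projection errors already satisfy \eqref{eq:proju_est} and \eqref{eq:projw_est}, it suffices to bound the discrete errors $\erru$ and $\errw$ at order $h$ in the norms appearing in the statement. The backbone of the argument is the error system \eqref{eq:ErrorEqnA}--\eqref{eq:ErrorEqnB} combined with the consistency estimates of Lemma~\ref{lem:EBounds}.

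First I would test \eqref{eq:ErrorEqnA} with $\tfuh = \erru$ and \eqref{eq:ErrorEqnB} with $\tfwh = \errw$ and subtract. Symmetry of $s_h$ makes the pair $s_h(\errw,\erru)$ and $s_h(\erru,\errw)$ cancel, producing a dissipation identity of the form
\[
\tfrac{1}{2}\tfrac{d}{dt}\|\erru\|_{L^2(\surfh)}^2 + \|\errw\|_{L^2(\surfh)}^2 + \Delta\Psi + \Delta K = E_t(\erru) + E_\psi(\erru) + E_k(\erru) - E_w(\errw),
\]
where $\Delta\Psi = s_h(\psi';\Pi_h(\solu),\erru) - s_h(\psi';\soluh,\erru)$ and $\Delta K = m_h(\bbb{k}_h(\Pi_h(\solu)) - \bbb{k}_h(\soluh),\erru)$. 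The Lipschitz hypotheses \eqref{eq:psipLipschitz}, \eqref{eq:kLipschitz} yield $|\Delta\Psi| \leq C_\psi \|\snabh \erru\|_{L^2(\surfh)}^2$ and $|\Delta K| \leq C_k \|\erru\|_{L^2(\surfh)}^2$, the latter being harmless for Gronwall. The consistency terms $E_t$, $E_k$, $E_w$ are dispatched via Young's inequality applied to \eqref{eq:EtRate}, \eqref{eq:EkRate}, \eqref{eq:EwRate}, each producing an $O(h^4)$ remainder plus absorbable fractions of $\|\erru\|^2$ or $\|\errw\|^2$.

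The genuine obstacle is that $\Delta\Psi$, and the bound \eqref{eq:EpsiRate} for $E_\psi$, both produce contributions in $\|\snabh \erru\|_{L^2(\surfh)}^2$ which do not sit on the left-hand side. To close the estimate I would exploit \eqref{eq:ErrorEqnB} a second time, now with $\tfwh = \erru$, to obtain
\[
\|\snabh \erru\|_{L^2(\surfh)}^2 = m_h(\errw,\erru) + E_w(\erru) \leq \delta\|\errw\|_{L^2(\surfh)}^2 + C_\delta \|\erru\|_{L^2(\surfh)}^2 + C h^4 \|\solw\|_{H^2(\surf)}^2
\]
for arbitrary $\delta>0$. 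Choosing $\delta$ small enough allows the $\Delta\Psi$ term and (after a further Young step on \eqref{eq:EpsiRate}) the $E_\psi$ term to be absorbed into the $\|\errw\|^2$ on the left, producing a differential inequality
\[
\tfrac{d}{dt}\|\erru\|_{L^2(\surfh)}^2 + c_0 \|\errw\|_{L^2(\surfh)}^2 + c_0 \|\snabh \erru\|_{L^2(\surfh)}^2 \leq C \|\erru\|_{L^2(\surfh)}^2 + C h^2 R(t),
\]
with some $c_0>0$ and $R(t) = 1 + \|\pd_t \solu\|_{H^2(\surf)}^2 + \|\solu\|_{H^2(\surf)}^2 + \|\solw\|_{H^2(\surf)}^2 \in L^1(0,T)$ by the regularity assumption \eqref{eq:ass_reg}. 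A Gronwall argument then yields the desired $O(h^2)$ bounds on the discrete errors, provided the initial datum is controlled. This is verified by writing $\erru(0) = (\solu^{-\ell}(0) - \bbb{id}_\surfh) - \proju(0)$: the first difference equals $-d(\bbb{y}_h)\unitnormal(\bbb{y})$ pointwise by \eqref{eq:bijection} and is thus $O(h^2)$ since $|d|=O(h^2)$ on $\surfh$, while the second is $O(h^2)$ by \eqref{eq:proju_est}. Combining the discrete-error bounds with \eqref{eq:proju_est}--\eqref{eq:projw_est} through the triangle inequality yields the claim.
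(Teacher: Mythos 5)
Your proposal is correct and follows essentially the same path as the paper's proof: same splitting into projection and discrete errors, same choice of test functions $\tfuh=\erru$, $\tfwh=\errw$ in the error equations, same key move of testing \eqref{eq:ErrorEqnB} a second time with $\tfwh=\erru$ to trade $\|\snabh\erru\|_{L^2(\surfh)}^2$ for an absorbable fraction of $\|\errw\|_{L^2(\surfh)}^2$, and same Gronwall closure with the $O(h^2)$ initial-error bound. The only cosmetic difference is that you make the initial-datum estimate explicit via the identity $\bbb{id}_\surf^{-\ell}-\bbb{id}_\surfh=-d\,\unitnormal$ rather than invoking interpolation theory, which is a slightly more direct phrasing of the same fact.
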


\begin{proof}
We proceed as for deriving \eqref{eq:estim1} but start with \eqref{eq:ErrorEqnA}, where we test with $\tfuh = \erru$, and with \eqref{eq:ErrorEqnB}, where we choose $\tfwh = \errw$. Taking the difference we obtain that 
\begin{align*}
m_{h}(\pd_{t} & \erru,\erru) + m_{h}(\errw,\errw) \\
= & \, - s_{h}(\psi';\Pi_{h}(\solu),\erru) + s_{h}(\psi';\soluh,\erru) - m_{h}(\bbb{k}_{h}(\Pi_{h}(\solu)),\erru) + m_{h}(\bbb{k}_{h}(\soluh),\erru) \\
& + E_t(\erru) + E_\psi(\erru) + E_k(\erru) - E_w(\errw). 
\end{align*}
In Lemma \ref{lem:EBounds} we absorb the norm of $\solu$ into $C$ to obtain that 
\[
 |E_t(\erru)| \leq C h^4 + \frac{1}{2} \| \erru \|_{L^2(\surfh)}^2, \quad |E_\psi(\erru)| \leq C h^2 + \frac{1}{2} \| \snabh \erru \|_{L^2(\surfh)}^2, 
\]
and similarly for the other two errors. Using that $\psi'$ and $\bbb{k}_{h}$ are Lipschitz we then get that 
\begin{align}
\frac{1}{2} \frac{d}{dt} & \| \erru \|_{L^2(\surfh)}^2 + \| \errw \|_{L^2(\surfh)}^2 \notag \\
\leq & \, \int_{\surfh} C_{\psi} | \snabh \soluh - \snabh \Pi_{h}(\solu) | \, | \snabh \erru | d\sigma_{h} + \int_{\surfh} C_{k} | \soluh - \Pi_{h}(\solu) | \, | \erru | d\sigma_{h} \notag \\
& + |E_t(\erru)| + |E_\psi(\erru)| + |E_k(\erru)| + |E_w(\errw)| \notag \\
\leq & \, C_{\psi} \| \snabh \erru \|_{L^2(\surfh)}^2 + C_k \| \erru \|_{L^2(\surf)}^2 \notag \\
& + C (h^2 + h^4) + \| \erru \|_{L^2(\surfh)}^2 + \frac{1}{2} \| \snabh \erru \|_{L^2(\surfh)}^2 + \frac{1}{2} \| \errw \|_{L^2(\surfh)}^2. \label{eq:estimerr}
\end{align}
Substituting $\tfwh = \erru$ in (\ref{eq:ErrorEqnB}) gives for any $\eps>0$ that 
\begin{equation} \label{eq:estimgrad}
 \| \snabh \erru \|_{L^2(\surfh)}^2 \leq \frac{\eps}{2} \| \errw \|_{L^2(\surfh)}^2 + \frac{1}{2\eps} \| \erru \|_{L^2(\surfh)}^2 + C h^4 + \| \erru \|_{L^2(\surfh)}^2.
\end{equation}
We can thus estimate the terms involving $\| \snabh \erru \|_{L^2(\surfh)}^2$ on the right-hand-side of \eqref{eq:estimerr} by terms involving $\eps \| \errw \|_{L^2(\surfh)}^2$. Choosing now $\eps>0$ small enough, these terms involving $\| \errw \|_{L^2(\surfh)}^2$ can then be absorbed in the left-hand-side to that altogether
\[ 
 \frac{d}{dt} \| \erru \|_{L^2(\surfh)}^2 + \| \errw \|_{L^2(\surfh)}^2 \leq C \| \erru \|_{L^2(\surfh)}^2 + C h^2. 
\]
By standard interpolation theory (recall that the identic map of the triangulated surface $\surfh$ linearly interpolates the identic map of $\surf$) the initial error satisfies 
\[
 \| \erru(0) \|_{L^2(\surfh)}^2 \leq \| \proju(0) \|_{L^2(\surfh)}^2 + \| \solu^{-\ell}(0) - \soluh(0) \|_{L^2(\surfh)}^2 \leq C h^2 + \| \bbb{id}_\surf^{-\ell} - \bbb{id}_{\surfh} \|_{L^2(\surfh)}^2 \leq C h^2. 
\]
Applying Gronwall therefore yields that
\[
 \| \erru \|_{L^\infty_{L^2(\surfh)}}^2 + \| \errw \|_{L^2_{L^2(\surfh)}}^2 \leq C h^2. 
\]
From \eqref{eq:estimgrad} we now see that also 
\[
 \| \snabh \erru \|_{L^2_{L^2(\surfh)}}^2 \leq C h^2.
\]
Together with \eqref{eq:proju_est} and \eqref{eq:projw_est} these two estimates conclude the proof of Corollary \ref{cor:errestim}.  
\end{proof}

\section{Software and simulations}
\label{sec:software}

\subsection{Time discretisation}
\label{sec:prob_discrete}

In order to illustrate the capability of the computational framework that is presented and analysed in the previous section we performed some numerical simulations for the specific model \eqref{eq:PDEnondim}. Its variational form with operator splitting in Problem \ref{prob:weak} is discretised in time with a simple semi-implicit first order scheme as follows: We split the time interval $[0,T]$ into $M \in \N$ equal parts of size $\tau = T/M$, denote the time steps with $t^{(m)} = m \tau$, and write $f^{(m)} = f(t^{(m)})$ for any time dependent fields or functions. 
\begin{problem} \label{prob:discrete}
Given $\surfh$, $S_{h}^3 \ni \solu_{c,h} \approx \solu_{c}$, and parameters $\lambda_{b}$, $\lambda_{l},$ $\lambda_{p},$ $l_{0}$, $u_B$, $k_L$, $u_{R}$, for $m=0, \dots, M-1$ find $(\soluh^{(m+1)},\solwh^{(m+1)}) \in S_{h}^3 \times S_{h}^3$ such that for all $(\tfuh,\tfwh) \in S_{h}^3 \times S_{h}^3$
\begin{align}
\int_{\surfh} \frac{1}{\tau} \solu^{(m+1)} & \cdot \tfuh + \lambda_{b} \snabh \solwh^{(m+1)} : \snabh \tfuh + \snabh \soluh^{(m+1)} : \snabh \tfuh + \lambda_{coupling}^{(m)} \soluh^{(m+1)} \cdot \tfuh d\sigma_{h} \notag \\ 
= & \int_{\surfh} \frac{1}{\tau} \solu^{(m)} \cdot \tfuh + \sqrt{2} x_{0} \frac{\snabh \soluh^{(m)} : \snab \tfuh}{|\snabh \soluh^{(m)}|} \notag \\
& \qquad + \lambda_{coupling}^{(m)} \Big{(} \solu_{c,h} + l_{0} \frac{\soluh^{(m)} - \solu_{c,h}}{|\soluh^{(m)} - \solu_{c,h}|} \Big{)} \cdot \tfuh + \frac{\lambda_{p}}{|V_{h}(\soluh^{(m)})|} \unitnormalh \cdot \tfuh d\sigma_{h}, \label{eq:MixedBleba} \\ 
\int_{\surfh} \snabh \soluh^{(m+1)} & : \snabh \tfwh - \solwh^{(m+1)} \cdot \tfwh d\sigma_{h} = 0, \label{eq:MixedBlebb}
\end{align}
with 
\[
 \lambda_{coupling}^{(m)} = \lambda_{l} \big{(} 1 + k_{L} H(u_{R} - |\soluh^{(m)} - \solu_{c,h}|) \big{)} H (u_{B} - |\soluh^{(m)} - \solu_{c,h}|)
\]
\end{problem}

\subsection{Implementation}

We have solved the above problem using the Python bindings from the DUNE-FEM module \cite{DedKloNolOhl_Comp_2010}, which is based on the \emph{Distributed and Unified Numerics Environment} (DUNE) \cite{BastianEtAl_DUNE_2}. DUNE is an open source C++ environment that uses a static polymorphic interfaces to describe grid based numerical schemes. The package provides a large number of realisations of these interfaces including a large number of finite element spaces on structured and unstructured grids. This approach allows for the efficient and flexible simulation of a large variety of mathematical models based on partial differential equations. 

The Python bindings described in \cite{DedNol_2018_prep} simplify the rapid prototyping of new schemes and models, while maintaining the efficiency and flexibility of the DUNE framework. This is achieved by using the domain specific language UFL \cite{AlnEtAl_ACMDL_2014} to describe the mathematical model and implementing the high level program control within Python, while
carrying out all computationally critical parts of the simulation in C++ using just in time compilation of the required DUNE components. Consequently, the assembly of the bilinear forms and solving of the linear and non linear problems is implemented in C++ while the time loop and the input and output of data is carried out using the Python scripting language. 

Meshes can be provided using a GMsh file or, as done for this work, by using the internal \emph{Dune Grid Format} (DGF). All simulations reported on in this paper were performed using a first order Lagrange space over an simplicial, locally adaptive, distributed grid, which can be used for both bulk and surface domains \cite{AlkaemperEtAl_ALUGrid}. Bindings for a number of different solver packages are available through DUNE-FEM including the iterative solvers from DUNE-ISTL \cite{BlattBastian_ISTL_2007} (used for this work), direct solvers from the SuiteSparse package, and a number of solvers and preconditioners from the PetSc package. The simulation results were exported using VTK and visualised using ParaView \cite{Aya_Paraview_2015}. 

In the following we show how to setup the grid and how some parts of the mathematical model are defined within UFL. The full code needed to perform the simulations shown in this paper is available (see Data Availability Statement at the end of this paper).

The first listing shows how to read in a grid for a cell obtained from experimental data (see Section \ref{sec:imgsim} for more detail on the corresponding simulations):
\begin{lstlisting}[texcl]
  from dune.alugrid import aluConformGrid
  from dune.fem.space import lagrange
  surfaceGrid = aluConformGrid("cell.dgf", dimgrid=2, dimworld=3)
  solutionSpace = lagrange(surfaceGrid, dimRange=3, order=1, storage="istl")

  # \upshape a vector-valued finite element function for the position, 
  # \upshape initialised with the vertex positions of the initial grid 
  position  = solutionSpace.interpolate(lambda x: x, name="position")
  # \upshape another finite element function, later on used to store the previous time step
  position_n = position.copy()
\end{lstlisting}

The following snippet demonstrates how the bending terms and tension terms are defined using UFL. The remaining terms, e.g., for the pressure and the linker-molecules, are defined in a very similar way: 
\begin{lstlisting}[texcl]
  # \upshape test and trial function used to define the bilinear forms
  u   = TrialFunction(solutionSpace)
  phi = TestFunction(solutionSpace)
  w   = TrialFunction(solutionSpace)
  eta = TestFunction(solutionSpace)
  
  def invNormNxN(eta):
    S1, S2, S3 = grad(eta[0]), grad(eta[1]), grad(eta[2])
    return sum( [ S1[i]*S1[i]+S2[i]*S2[i]+S3[i]*S3[i] for i in range(3) ] )

  # \upshape the bending terms using operator splitting 
  bending_im = lam_b * inner(grad(w), grad(phi))
  op_split_pos_im = inner(grad(u), grad(eta))
  op_split_curv_im = -inner(w, eta)
  # \upshape the tension terms 
  tension_im = inner(grad(u), grad(phi))
  tension_ex = sqrt(2.0) * x_0 * 1/NormNxN(position_n) *\
               inner(grad(position_n), grad(phi))
\end{lstlisting}

In each time step a saddle point problem is solved using a Uzawa-type algorithm where a CG method is used to invert the
Schur complement as described, for example, in \cite{braess_2007}. The main algorithm is implemented in Python calling C++ routines to compute the matrix-vector operations and to solve the inner problem. The time loop with the solver is slightly to large to list here but, as stated above already, the whole code is publicly available, see the Data Availability Statement at the end of the paper for further information. 

A number of tests have been performed for problems with known solutions $(\solu,\solw)$ to validate the convergence (rates) of Theorem \ref{theo:main} and Corollary \ref{cor:errestim}. Recall that the choices of the tension term $\psi$ and the coupling term $\bbb{k}$ in the specific model \eqref{eq:PDEnondim} do not satisfy the requirements of the analysis. However, in our simulations, the denominators in these terms did not become very small. Comparative simulations with the regularised choices \eqref{eq:psip_eps} and \eqref{eq:fcp_eps} with $\eps = 10^{-5}$ did not reveal any essential difference. For conciseness, we don't report on these code validations in detail but focus on an investigation of the parameter space instead.

\subsection{Influence of the initial geometry}
\label{sec:geomsim}

The software framework allows for assessing the impact of geometries on blebbing propensity. One point of interest has been whether surface tension and pressure are sufficient to initiate blebbing without any weakening of the cortex, as found in \cite{ColEtAl_Nature_2017} in 2D. We also further study the parameter space but remark that the simulation results are at a qualitative level. An in-depth discussion involving quantitative information is beyond the scope of this article and left for future investigations. 

We consider an initial shape $\surf$ obtained by deforming a sphere of radius one by (all lengths in $\mu m$) 
\begin{equation} \label{eq:deformation}
\bbb{y} = ( y_{1}, y_{2}, y_{3} ) \to ( 4y_{1}, 4y_{2}, \ttt{y}_{3} ), \quad \ttt{y}_{3} = \sign(y_{3}) \begin{cases} (3 - \cos(\pi r/2))/2, & \quad \mbox{if } r \leq 2, \\ \sqrt{4 - (r-2)^2}, & \quad \mbox{if } r > 2, \end{cases}
\end{equation}
with $r = \sqrt{(4y_{1})^2 + (4y_{2})^2}$. This yields a shape similar to a discocyte (or red blood cell, see Figure \ref{fig:01_mesh}) with a volume of about $V(\bbb{id}_{\surf}) \approx 150 \mu m^3$ and a largest distance of $4 \mu m$ from the centre. Parameters for the various forces vary in the literature, not least due to differing cell types and differences in the models. For the tension coefficient we chose $k_{p} = 15 pN/ \mu m$ (ranges from $2 pN / \mu m$ \cite{LimKoonChiam_ComputMethBiomechBiomedEng_2013} to $100 pN/ \mu m$ \cite{ManYanLowAll_BiohysJ_2016}), for the bending coefficient $k_{b} = 0.075 pN \mu m$ (between $0.01 pN \mu m$ \cite{WooEtAl_Biomech_2014} and $0.2 pN \mu m$ \cite{LimKoonChiam_ComputMethBiomechBiomedEng_2013}), and for the linker spring coefficient $k_{l} = 270 pN / \mu m^3$ (close to $267 pN /\mu m^3$ in \cite{StrGuy_MathMedBiol_2013}). The parameters $x_{0} = 0.95$, $l_{0} = 40 nm$, and $u_{B}= 56 nm$ were chosen as in \cite{ColEtAl_Nature_2017}. The parameters $u_{R} = 7.5nm$ and $k_{L} = 500.0$ were chosen ad hoc but repeating some simulations with $k_{L} = 0$ (particularly those with higher tension so that the membrane got closer to the cortex) didn't reveal any visual difference. The pressure difference $p_{0}/|V(\solu(0))| \approx 2.25 Pa$ turned out sufficient to initiate blebbing without cortex weakening. This is smaller than values found in the literature (between $10Pa$ \cite{WooEtAl_Biomech_2014} and $81 Pa$ \cite{TysZatBreKay_PNAS_2014}) but we note that the dimension is higher and the model does not account for the stiff cortex. With a length scale of $U = 1\mu m$ the set of non-dimensional parameters is stated in Table \ref{tab:01_geomparam} and was used for simulations unless stated otherwise. 

A triangulation $\surfh$ is obtained by starting with a cube with vertices on the unit-sphere, then diagonally cutting the square faces into triangles, and then bisecting all triangles 14 times such that the longest edge is halved including projecting the new vertices to the unit-sphere after each refinement step. After, the above map \eqref{eq:deformation} is applied to the 196608 vertices. Figure \ref{fig:01_mesh} gives an impression of a mesh thus obtained but with a ten refinements only. The time step size was set to $\tau = 0.0025$ and time stepping ended at $T=2$. At that end time the final shapes usually weren't at rest yet but the deformations were sufficient to draw qualitative conclusions. 

Figure \ref{fig:02_BSKT} gives an overview of some shapes at the final time for the data set in Table \ref{tab:01_geomparam} and some variants (see Figure caption for details). Axisymmetry of the initial shape seems preserved, which suggests comparing cuts through the centres for more insight. Figure \ref{fig:03_BSKT1} displays the slices through the initial and the final shape that is visible in Figure \ref{fig:02_BSKT}A. The color code from Figure \ref{fig:02_BSKT} is used again so that parts of the membrane with broken linkers are coloured red. Differences are predominant in the concave part of the initial shape, where the membrane has moved outwards and detached. The tension force in such concave parts points outwards and, together with the pressure, initiates a bleb without requiring any weakening of the cortex. This simulation thus supports the finding in \cite{ColEtAl_Nature_2017}. 

In Figure \ref{fig:04_BSKT2} we compare the final shapes for different parameters of the linker strength $\lambda_{l}$, more precisely, slices of the shapes in Figures \ref{fig:02_BSKT}A and \ref{fig:02_BSKT}B. Note that the color code is different (see caption of Figure \ref{fig:04_BSKT2}). The deformation isn't much stronger as, once the membrane is detached, the linker terms doesn't influence the evolution any further. But a weaker linker strength $\lambda_{l}$ and, thus, less resistance to breaking leads to a wider bleb site. 

A smaller resting length parameter $x_{0}$ increases the surface tension, which leads to a faster evolution and a stronger final deformation. This is visible in Figure \ref{fig:05_BSKT3} where we compare the slices of the shapes in Figures \ref{fig:02_BSKT}A and \ref{fig:02_BSKT}C, and the (red) curve for the smaller $x_{0}$ indicates that the membrane has moved further away from the initial shape. 

The impact of a higher pressure is illustrated in Figure \ref{fig:06_BSKT4} where slices through the shapes in Figures \ref{fig:02_BSKT}A (blue) and \ref{fig:02_BSKT}D (red) are overlayed. The effect resembles a bit that of a smaller linker strength in that the deformation isn't much different and in that the bleb site is much bigger. The pressure term doesn't break down after detachment and continues to push outwards, though, so that the membrane has moved a bit further throughout the bleb site.

\subsection{Application to experimental data}
\label{sec:imgsim}

Apart from given, 'in-vitro' geometries and their influence on blebbing, users may also be interested in studying the effect of 'in-vivo' geometries that are obtained from experimental data. The image postprocessing outlined in \cite{DuHawSteBre_BMCBioinf_2013} enables to extract triangulated surfaces representing the cell membrane from 3D images of cells, which then can be steered into the software framework. This was done with data of a Dictyostelium cell (also used in \cite{DuHawSteBre_BMCBioinf_2013}) moving by actin-driven pseudo-pods without any blebbing. However, the purpose is again to showcase the capability of the software framework rather than to extract any quantitative information, which is left for future investigations. 

We used the non-dimensional parameters in Table \ref{tab:02_imgparam} and with $T = 20$ and $\tau = 0.02$. Figure \ref{fig:07_CD}, left, shows the triangulated surface $\surfh$ that has been obtained from the image data. On the right of Figure \ref{fig:07_CD} the final shape is displayed where the same colour code as in Figure \ref{fig:02_BSKT} for the deformation strength is used. As in the simulations before we observe that blebs form in concave regions. We also see some deformations at the sides where small protrusions become quite spiky. Both tension and resistance to bending are expected to prevent any singularities to occur, however the geometry seems under-resolved by the mesh in these areas.

\section{Conclusion}

A general modelling framework for the onset of blebbing has been presented and analysed. It is formulated in terms of partial differential equations on the initial membrane, which is considered as a hypersurface. Various forces acting on the plasma membrane due to its elastic properties, linker molecules coupling it to the cell cortex, and cell internal pressure are accounted for. Fluid flow within and outside of the cell is essentially neglected modulo a drag force but may be considered in future studies. 

The general framework is particularly flexible with regards to membrane tension and the coupling forces. A convergence analysis of a surface finite element discretisation shows its robustness to model alterations within not too restrictive limits. There are some open questions with regards to the discretisation in time, and as blebs are local events, spatial mesh adaptivity may be beneficial. 

Software for a specific instance of the general model is provided and has been used to perform some numerical simulations. A convenient high-level interface in Python allows for directly implementing the model in its variational form and solving it by an efficient software backend. Standard software usually does not provide functionality for numerically solving problems on moving domains or hypersurfaces in 3D out of the box but requires a substantial amount of coding. We hope that our approach will address this issue and simplify the implementation of such moving boundary problems.

\section*{Conflict of Interest Statement}

The authors declare that the research was conducted in the absence of any commercial or financial relationships that could be construed as a potential conflict of interest.

\section*{Author Contributions}

Bj\"{o}n Stinner provided context and background, significantly contributed to the model and the numerical analysis, and set most of the paper. \\
Andreas Dedner was core developer of the Python bindings and of the DUNE
software framework and set parts of the paper. \\
Adam Nixon contributed to the model and the numerical analysis, performed the simulations, and set parts of the paper. 

\section*{Funding}

This project was supported by the Engineering and Physical Sciences Research Council (EPSRC, United Kingdom), grant numbers EP/K032208/1 and EP/H023364/1. 

\section*{Acknowledgments}

The authors would like to thank the Isaac Newton Institute for Mathematical Sciences, Cambridge, for support and hospitality during the programme \emph{Geometry, compatibility and structure preservation in computational differential equations}, where work on this paper was undertaken. 

\section*{Data Availability Statement}

The Python scripts used to obtain the results reported on here are available in a git repository hosted on the DUNE gitlab server: \\
\url{https://gitlab.dune-project.org/bjorn.stinner/sfem_blebs}.

The two main scripts are {\tt blebbing\_artgeom.py} and {\tt blebbing\_imgdata.py} used for the results from Sections
\ref{sec:geomsim} and \ref{sec:imgsim}, respectively. The setup of the model, the time loop, and the solver used in both main
scripts are contained in {\tt blebbing\_compute.py}. Some auxiliary functions can be found in {\tt blebbing\_tools.py}.

Both scripts can be executed using the DUNE-FEM docker container. A script `startdune.sh` is available in the git repository to download and start the container. This requires the 'docker' software to be available on the system. It can be downloaded for different platforms including Linux, MacOS, and the latest Windows version. More information is available under \\
\url{https://dune-project.org/sphinx/content/sphinx/dune-fem/installation.html}. 

\bibliographystyle{plain}
\bibliography{sfem_blebs}

\begin{thebibliography}{10}

\bibitem{AlkaemperEtAl_ALUGrid}
M~Alk{\"a}mper, A~Dedner, R~Kl{\"o}fkorn, and M~Nolte.
\newblock The {{DUNE}}-{{ALUGrid Module}}.
\newblock {\em Archive of Numerical Software}, 4(1):1--28, 2016.

\bibitem{AlnEtAl_ACMDL_2014}
Martin~S. Aln{\ae}s, Anders Logg, Kristian~B. {\O}lgaard, Marie~E. Rognes, and
  Garth~N. Wells.
\newblock Unified form language: {{A}} domain-specific language for weak
  formulations of partial differential equations.
\newblock {\em ACM Transactions on Mathematical Software}, 40(2):1--37, March
  2014.

\bibitem{Aya_Paraview_2015}
Utkarsh Ayachit.
\newblock {\em The {{ParaView}} Guide: Updated for {{ParaView}} Version 4.3}.
\newblock {Kitware}, {Los Alamos}, full color version edition, 2015.
\newblock OCLC: 944221263.

\bibitem{BastianEtAl_DUNE_2}
P.~Bastian, M.~Blatt, A.~Dedner, C.~Engwer, R.~Kl{\"o}fkorn, R.~Kornhuber,
  M.~Ohlberger, and O.~Sander.
\newblock A generic grid interface for parallel and adaptive scientific
  computing. {{Part II}}: Implementation and tests in {{DUNE}}.
\newblock {\em Computing}, 82(2-3):121--138, July 2008.

\bibitem{BastianEtAl_DUNE_1}
P.~Bastian, M.~Blatt, A.~Dedner, C.~Engwer, R.~Kl{\"o}fkorn, M.~Ohlberger, and
  O.~Sander.
\newblock A generic grid interface for parallel and adaptive scientific
  computing. {{Part I}}: Abstract framework.
\newblock {\em Computing}, 82(2-3):103--119, July 2008.

\bibitem{BlattBastian_ISTL_2007}
M~Blatt and P~Bastian.
\newblock The {{Iterative Solver Template Library}}.
\newblock In {\em Applied {{Parallel Computing}} -- {{State}} of the {{Art}} in
  {{Scientific Computing}}}, pages 666--675. {Springer}, {Berlin/Heidelberg},
  2007.

\bibitem{braess_2007}
Dietrich Braess.
\newblock {\em Finite {{Elements}}: {{Theory}}, {{Fast Solvers}}, and
  {{Applications}} in {{Solid Mechanics}}}.
\newblock {Cambridge University Press}, {Cambridge}, 3 edition, 2007.

\bibitem{CamBag_PhysFluids_2017}
Eric~J. Campbell and Prosenjit Bagchi.
\newblock A computational model of amoeboid cell swimming.
\newblock {\em Physics of Fluids}, 29(10):101902, October 2017.

\bibitem{CamBag_SoftMatter_2018}
Eric~J. Campbell and Prosenjit Bagchi.
\newblock A computational model of amoeboid cell motility in the presence of
  obstacles.
\newblock {\em Soft Matter}, 14(28):5741--5763, 2018.

\bibitem{ChaPal_Nature_2008}
Guillaume Charras and Ewa Paluch.
\newblock Blebs lead the way: How to migrate without lamellipodia.
\newblock {\em Nature Reviews Molecular Cell Biology}, 9(9):730--736, September
  2008.

\bibitem{ChaEtAl_Nature_2005}
Guillaume~T. Charras, Justin~C. Yarrow, Mike~A. Horton, L.~Mahadevan, and T.~J.
  Mitchison.
\newblock Non-equilibration of hydrostatic pressure in blebbing cells.
\newblock {\em Nature}, 435(7040):365--369, May 2005.

\bibitem{ColEtAl_Nature_2017}
Sharon Collier, Peggy Paschke, Robert~R Kay, and Till Bretschneider.
\newblock Image based modeling of bleb site selection.
\newblock {\em Scientific reports}, 7(1):6692, July 2017.

\bibitem{DedKloNolOhl_Comp_2010}
Andreas Dedner, Robert Kl{\"o}fkorn, Martin Nolte, and Mario Ohlberger.
\newblock A generic interface for parallel and adaptive discretization schemes:
  Abstraction principles and the {{Dune}}-{{Fem}} module.
\newblock {\em Computing}, 90(3-4):165--196, November 2010.

\bibitem{DedNol_2018_prep}
Andreas Dedner and Martin Nolte.
\newblock The {{Dune Python Module}}.
\newblock {\em arXiv:1807.05252 [cs]}, July 2018.

\bibitem{DuHawSteBre_BMCBioinf_2013}
Cheng-Jin Du, Phillip~T Hawkins, Len~R Stephens, and Till Bretschneider.
\newblock {{3D}} time series analysis of cell shape using {{Laplacian}}
  approaches.
\newblock {\em BMC Bioinformatics}, 14(1):296, December 2013.

\bibitem{Dzi_PDECV_1988}
Gerhard Dziuk.
\newblock Finite elements for the {{Beltrami}} operator on arbitrary surfaces.
\newblock In {\em Partial Differential Equations and Calculus of Variations},
  pages 142--155. {Springer}, 1988.

\bibitem{DziEll_JComputMath_2007}
Gerhard Dziuk and Charles~M Elliott.
\newblock Surface finite elements for parabolic equations.
\newblock {\em Journal of Computational Mathematics}, 25:385--407, 2007.

\bibitem{EllRan_NumerMath_2015}
Charles~M. Elliott and Thomas Ranner.
\newblock Evolving surface finite element method for the
  {{Cahn}}\textendash{{Hilliard}} equation.
\newblock {\em Numerische Mathematik}, 129(3):483--534, March 2015.

\bibitem{FacGro_CellBiol_2008}
Oliver~T. Fackler and Robert Grosse.
\newblock Cell motility through plasma membrane blebbing.
\newblock {\em The Journal of Cell Biology}, 181(6):879--884, June 2008.

\bibitem{FanHuiWeiEtAl_SciRep_2017}
Chao Fang, T.~H. Hui, X.~Wei, X.~Shao, and Yuan Lin.
\newblock A combined experimental and theoretical investigation on cellular
  blebbing.
\newblock {\em Scientific Reports}, 7(1):16666, December 2017.

\bibitem{GouBoqOliRaz_PlosONE_2019}
Mohammad Goudarzi, Aleix {Boquet-Pujadas}, Jean-Christophe {Olivo-Marin}, and
  Erez Raz.
\newblock Fluid dynamics during bleb formation in migrating cells in vivo.
\newblock {\em PLOS ONE}, 14(2):e0212699, February 2019.

\bibitem{GouTarMilRazEtAl_DevelCell_2017}
Mohammad Goudarzi, Katsiaryna Tarbashevich, Karina Mildner, Isabell Begemann,
  Jamie Garcia, Azadeh Paksa, Michal {Reichman-Fried}, Harsha Mahabaleshwar,
  Heiko Blaser, Johannes Hartwig, Dagmar Zeuschner, Milos Galic, Michel Bagnat,
  Timo Betz, and Erez Raz.
\newblock Bleb {{Expansion}} in {{Migrating Cells Depends}} on {{Supply}} of
  {{Membrane}} from {{Cell Surface Invaginations}}.
\newblock {\em Developmental Cell}, 43(5):577--587.e5, December 2017.

\bibitem{Hel_ZNaturfC_1973}
W~Helfrich.
\newblock {Elastic Properties of Lipid Bilayers: Theory and Possible
  Experiments}.
\newblock {\em Zeitschrift f{\"u}r Naturforschung C}, 28(11-12):693--703,
  December 1973.

\bibitem{LimKoonChiam_ComputMethBiomechBiomedEng_2013}
Fong~Yin Lim, Yen~Ling Koon, and Keng-Hwee Chiam.
\newblock A computational model of amoeboid cell migration.
\newblock {\em Computer Methods in Biomechanics and Biomedical Engineering},
  16(10):1085--1095, October 2013.

\bibitem{ManYanLowAll_BiohysJ_2016}
Kathryn Manakova, Huaming Yan, John Lowengrub, and Jun Allard.
\newblock Cell {{Surface Mechanochemistry}} and the {{Determinants}} of {{Bleb
  Formation}}, {{Healing}}, and {{Travel Velocity}}.
\newblock {\em Biophysical Journal}, 110(7):1636--1647, April 2016.

\bibitem{MouGom_ComputMethApplMechEng_2017}
Adrian Moure and Hector Gomez.
\newblock Phase-field model of cellular migration: {{Three}}-dimensional
  simulations in fibrous networks.
\newblock {\em Computer Methods in Applied Mechanics and Engineering},
  320:162--197, June 2017.

\bibitem{Pon_rev_2017}
Godwin Ponuwei.
\newblock Unmasking {{Plasma Membrane Blebbing}}.
\newblock {\em Journal of Biomedical Science and Applications}, 1(2):10, 2017.

\bibitem{SchLieKerKoz_BiophysJ_2014}
Yonatan Schweitzer, Arnon~D. Lieber, Kinneret Keren, and Michael~M. Kozlov.
\newblock Theoretical {{Analysis}} of {{Membrane Tension}} in {{Moving Cells}}.
\newblock {\em Biophysical Journal}, 106(1):84--92, January 2014.

\bibitem{StrGuy_MathMedBiol_2013}
W.~Strychalski and R.~D. Guy.
\newblock A computational model of bleb formation.
\newblock {\em Mathematical Medicine and Biology}, 30(2):115--130, June 2013.

\bibitem{StrGuy_BiophysJ_2016}
Wanda Strychalski and Robert~D. Guy.
\newblock Intracellular {{Pressure Dynamics}} in {{Blebbing Cells}}.
\newblock {\em Biophysical Journal}, 110(5):1168--1179, March 2016.

\bibitem{TalKarSalTruZapLaP_PRL_2015}
Alessandro Taloni, Elena Kardash, Oguz~Umut Salman, Lev Truskinovsky, Stefano
  Zapperi, and Caterina A.~M. La~Porta.
\newblock Volume {{Changes During Active Shape Fluctuations}} in {{Cells}}.
\newblock {\em Physical Review Letters}, 114(20):208101, May 2015.

\bibitem{TysZatBreKay_PNAS_2014}
R.~A. Tyson, E.~Zatulovskiy, R.~R. Kay, and T.~Bretschneider.
\newblock How blebs and pseudopods cooperate during chemotaxis.
\newblock {\em Proceedings of the National Academy of Sciences},
  111(32):11703--11708, August 2014.

\bibitem{WerBurPie_2019}
Philipp Werner, Martin Burger, and Jan-Frederik Pietschmann.
\newblock A {{PDE}} model for bleb formation and interaction with linker
  proteins.
\newblock {\em arXiv:1904.03474 [math, q-bio]}, May 2019.

\bibitem{WooEtAl_IMAJAM_2014}
T.~E. Woolley, E.~A. Gaffney, S.~L. Waters, J.~M. Oliver, R.~E. Baker, and
  A.~Goriely.
\newblock Three mechanical models for blebbing and multi-blebbing.
\newblock {\em IMA Journal of Applied Mathematics}, 79(4):636--660, August
  2014.

\bibitem{WooEtAl_Biomech_2014}
Thomas~E. Woolley, Eamonn~A. Gaffney, James~M. Oliver, Ruth~E. Baker, Sarah~L.
  Waters, and Alain Goriely.
\newblock Cellular blebs: Pressure-driven, axisymmetric, membrane protrusions.
\newblock {\em Biomechanics and Modeling in Mechanobiology}, 13(2):463--476,
  April 2014.

\bibitem{YouMit_Biomech_2010}
Jennifer Young and Sorin Mitran.
\newblock A numerical model of cellular blebbing: {{A}} volume-conserving,
  fluid\textendash{}structure interaction model of the entire cell.
\newblock {\em Journal of Biomechanics}, 43(2):210--220, January 2010.

\end{thebibliography}

\section*{Figures and tables}

\begin{table}[h]
\begin{center}
\begin{tabular}{|l|l|l|l|l|l|l|l|}
\hline 
\rule{0pt}{12pt} $x_0$ & $\lambda_{b}$ & $\lambda_{l}$ & $l_{0}$ & $u_{B}$ & $k_{L}$ & $u_{R}$ & $\lambda_{p}$ \\ \hline 
\rule{0pt}{12pt} 0.95  &         0.005 &            18 &    0.04 &   0.056 &   500.0 &    0.0075 &          22.5 \\ \hline 
\end{tabular}
\end{center}
\caption{Standard non-dimensional parameters for numerical simulations with a given geometry, see Section \ref{sec:geomsim} for further details.}
\label{tab:01_geomparam} 
\end{table}

\begin{table}[h]
\begin{center}
\begin{tabular}{|l|l|l|l|l|l|l|l|}
\hline 
\rule{0pt}{12pt} $x_0$ & $\lambda_{b}$ & $\lambda_{l}$ & $l_{0}$ & $u_{B}$ & $k_{L}$ & $u_{R}$ & $\lambda_{p}$ \\ \hline 
\rule{0pt}{12pt} 0.95  &         0.125 &          0.72 &     0.2 &    0.28 &   500.0 &    0.15 &         150.0 \\ \hline 
\end{tabular}
\end{center}
\caption{Non-dimensional parameters for numerical simulations with an initial surface obtained from image data, see Section {sec:imgsim} for further details.}
\label{tab:02_imgparam} 
\end{table}

\begin{figure}[h!]
\begin{center}
 \includegraphics[width=6cm]{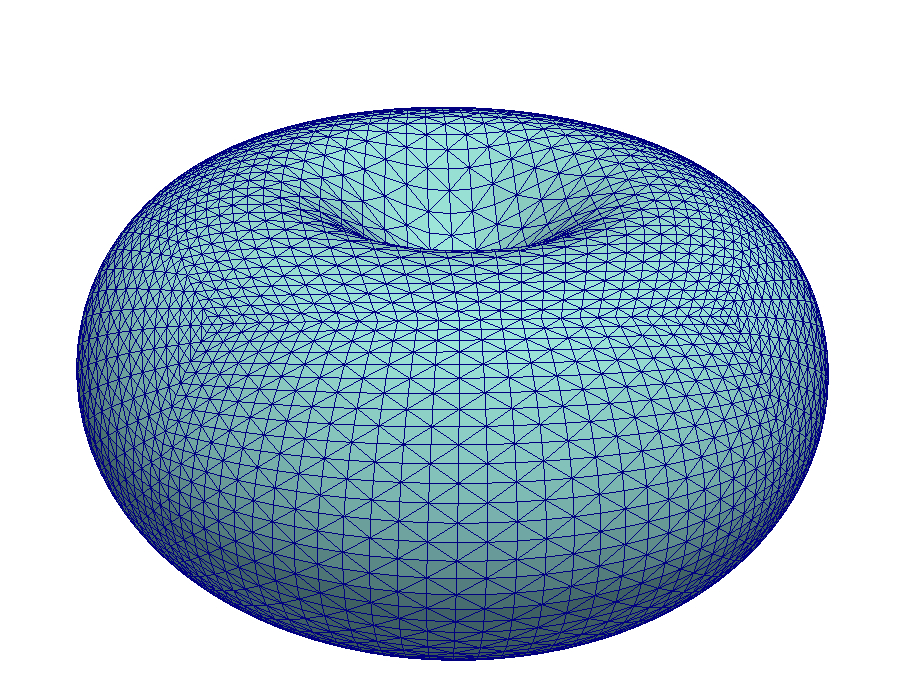}
\end{center}
\caption{Illustration of the shape used in Section \ref{sec:geomsim} and a mesh $\surfh$. For better visibility of the triangles, only ten bisections were performed resulting in a mesh with 20480 vertices. A finer mesh with 196608 vertices was used for the computations.} 
\label{fig:01_mesh} 
\end{figure}

\begin{figure}[h!]
\begin{center}
\includegraphics[width=16cm]{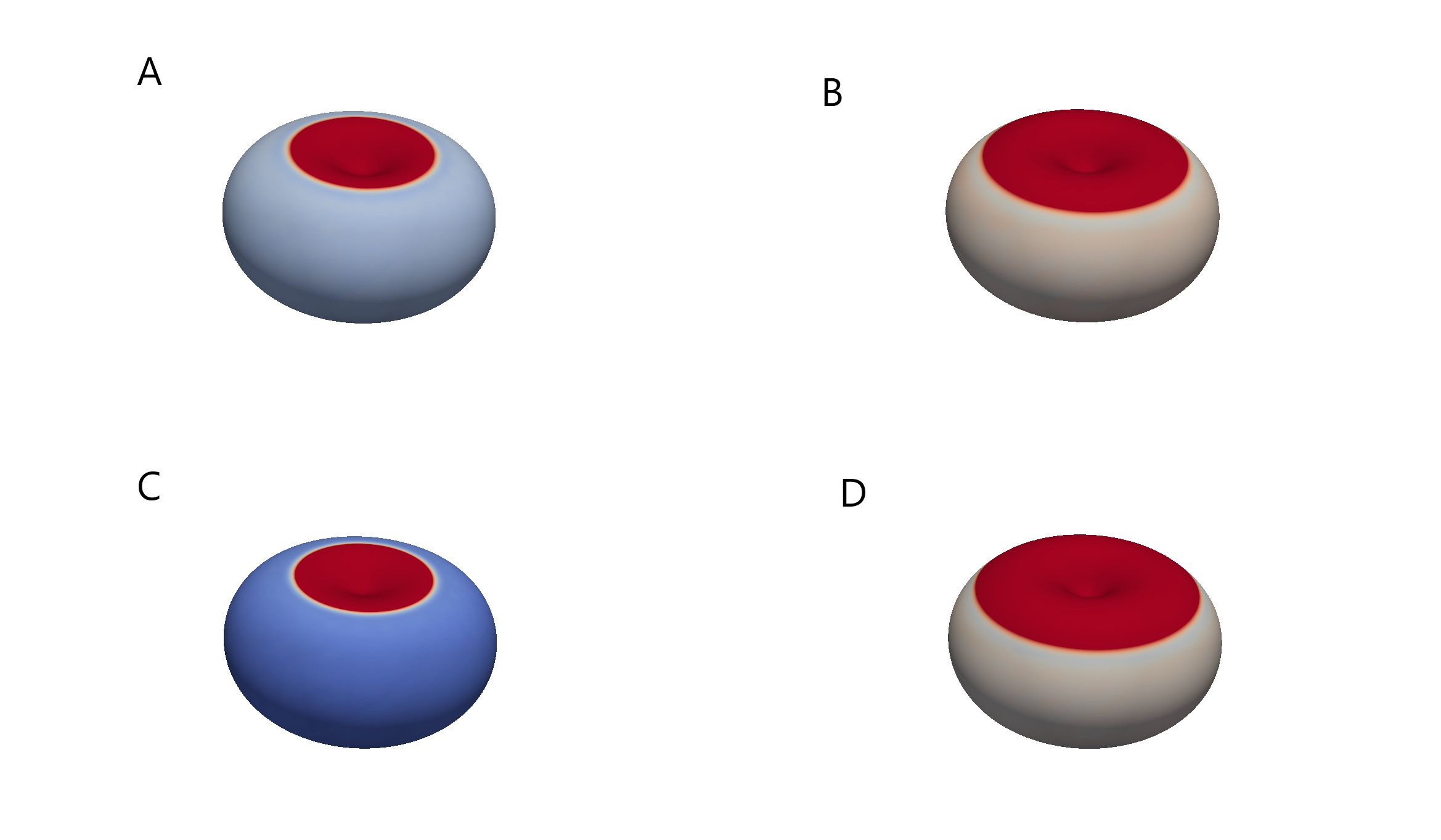}
\includegraphics[width=7cm]{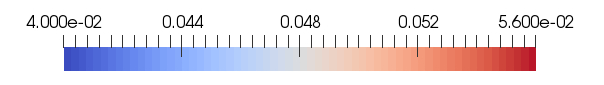}
\end{center}
\caption{Final shapes for computations with the initial shape in Figure \ref{fig:01_mesh}. The colour scheme indicates the distance of the membrane to the cortex $|\soluh - \solu_{h,c}|$. Values below the resting length $l_0 = 0.04$ are highlighted in blue and values above the critical length of breaking $u_{B} = 0.056$ in red, whilst values in between are shaded as indicated on the bar. The parameters in Table \ref{tab:01_geomparam} lead to the upper left shape (A). For B, the linker strength was reduced by setting $\lambda_{l} = 12$. For C, the tension was increased by setting $x_{0} = 0.85$. For $D$, the pressure was increased by setting $p_{0} = 30$.}
\label{fig:02_BSKT} 
\end{figure}

\begin{figure}
\begin{center}
\includegraphics[width=16cm]{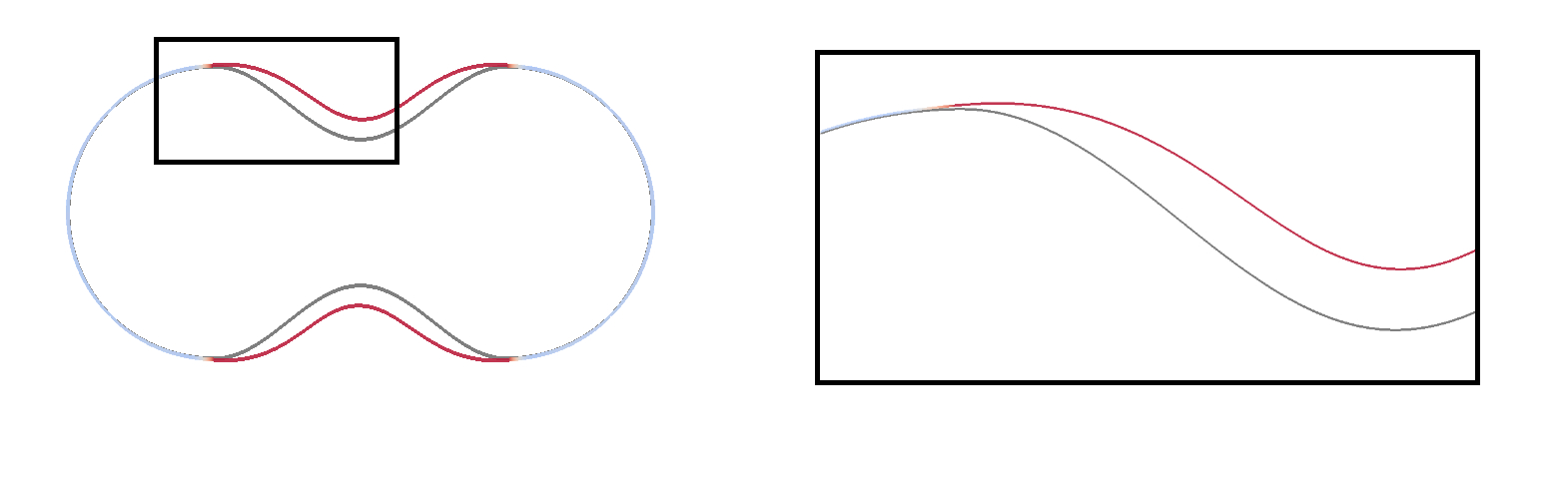}
\end{center}
\caption{Slice of the initial and final shape (latter on top). Simulation data in Table \ref{tab:01_geomparam}, see Section \ref{sec:geomsim} for further simulation details. A magnified image of the black box is presented on the right. The color code is as in Figure \ref{fig:02_BSKT}.}
\label{fig:03_BSKT1}	
\end{figure}

\begin{figure}
\begin{center}
\includegraphics[width=16cm]{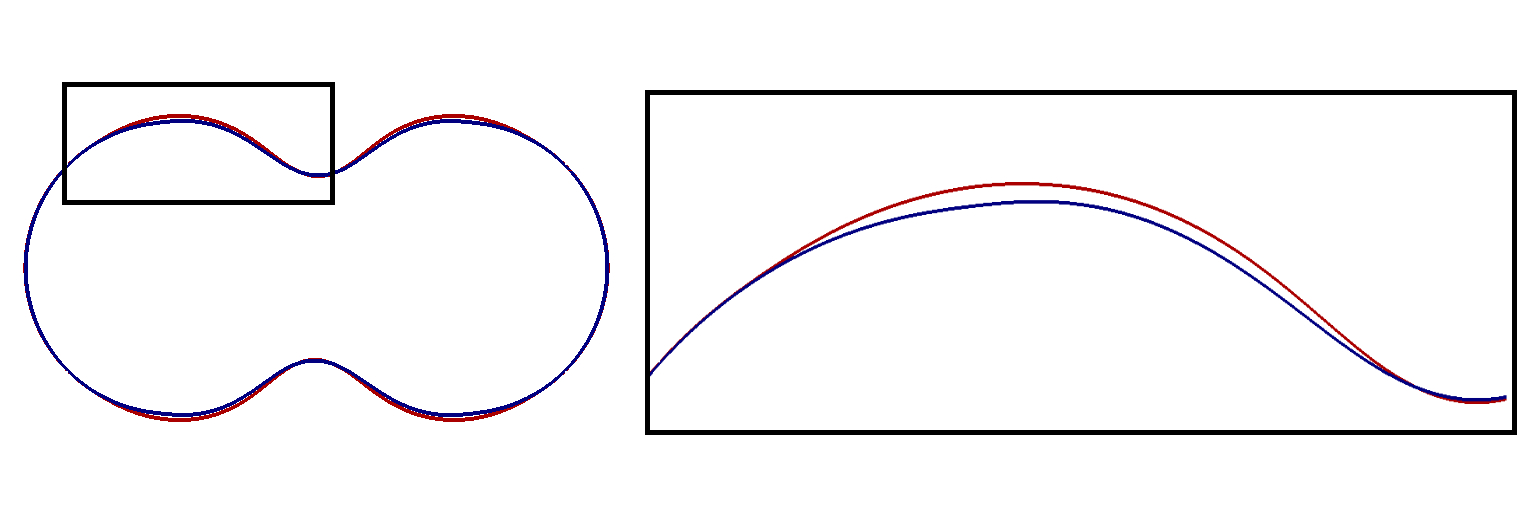}
\end{center}
\caption{Slices of final shapes for parameters as in Table \ref{tab:01_geomparam} but different linker strengths, namely $\lambda_{l} = 18$ (blue) and $\lambda_{l} = 12$ (red), with the latter on top. A magnified image of the black box is presented on the right. See Section \ref{sec:geomsim} for further details.}
\label{fig:04_BSKT2}
\end{figure}

\begin{figure}
\begin{center}
\includegraphics[width=16cm]{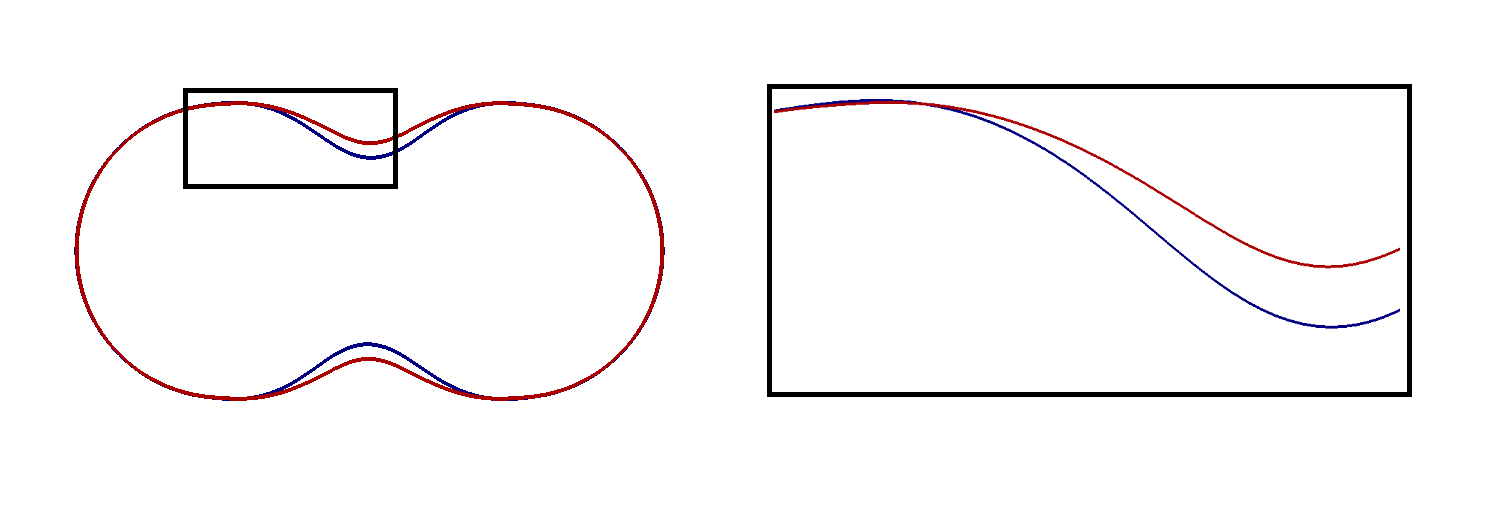}
\end{center}
\caption{Slices of final shapes for parameters as in Table \ref{tab:01_geomparam} but different membrane tensions, we chose $x_{0} = 0.95$ (blue) and $x_{0} = 0.85$ (red), with the latter on top. A magnified image of the black box is presented on the right. See Section \ref{sec:geomsim} for further details.}
\label{fig:05_BSKT3}
\end{figure}

\begin{figure}
\begin{center}
\includegraphics[width=16cm]{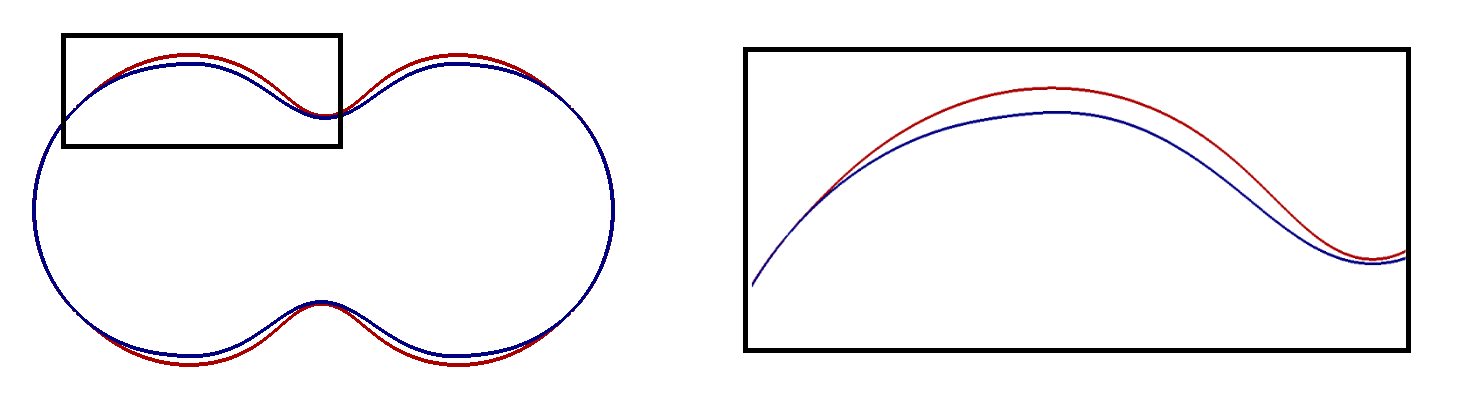}
\end{center}
\caption{Slices of final shapes for parameters as in Table \ref{tab:01_geomparam} but different pressure parameters, we set $p_{0} = 22.5$ (blue) and $p_{0} = 30$ (red), with the latter on top. A magnified image of the black box is presented on the right. See Section \ref{sec:geomsim} for further details.}
\label{fig:06_BSKT4}
\end{figure}

\begin{figure}
\begin{center}
\includegraphics[width=7cm]{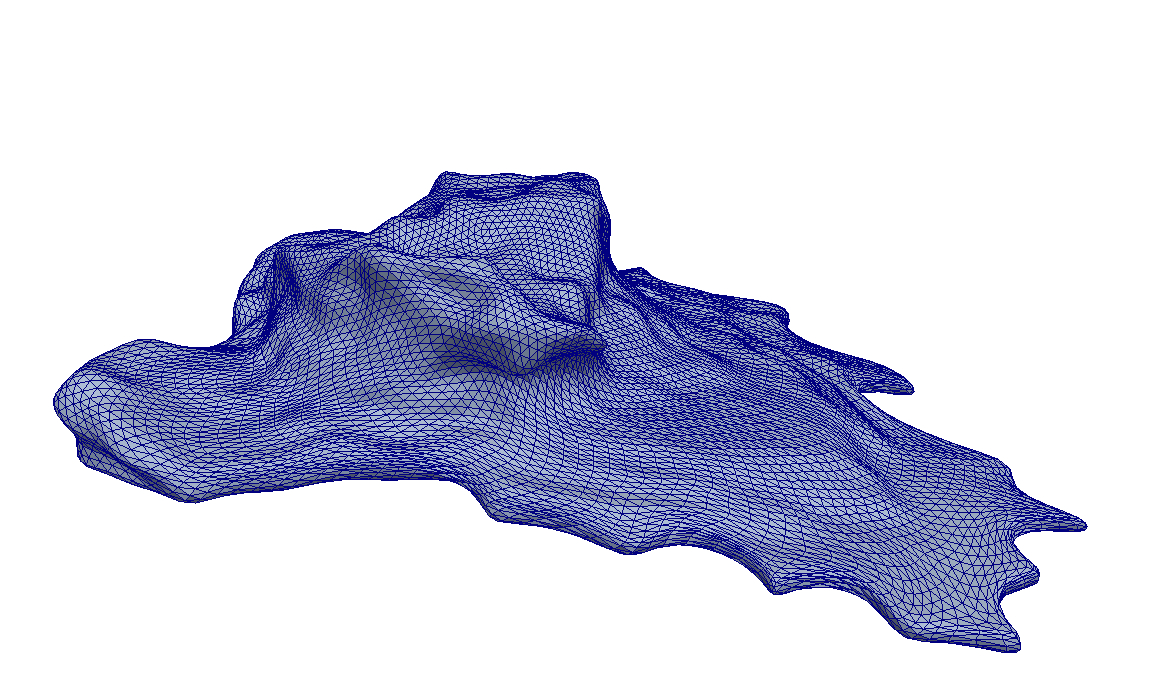} \hfill \includegraphics[width=7cm]{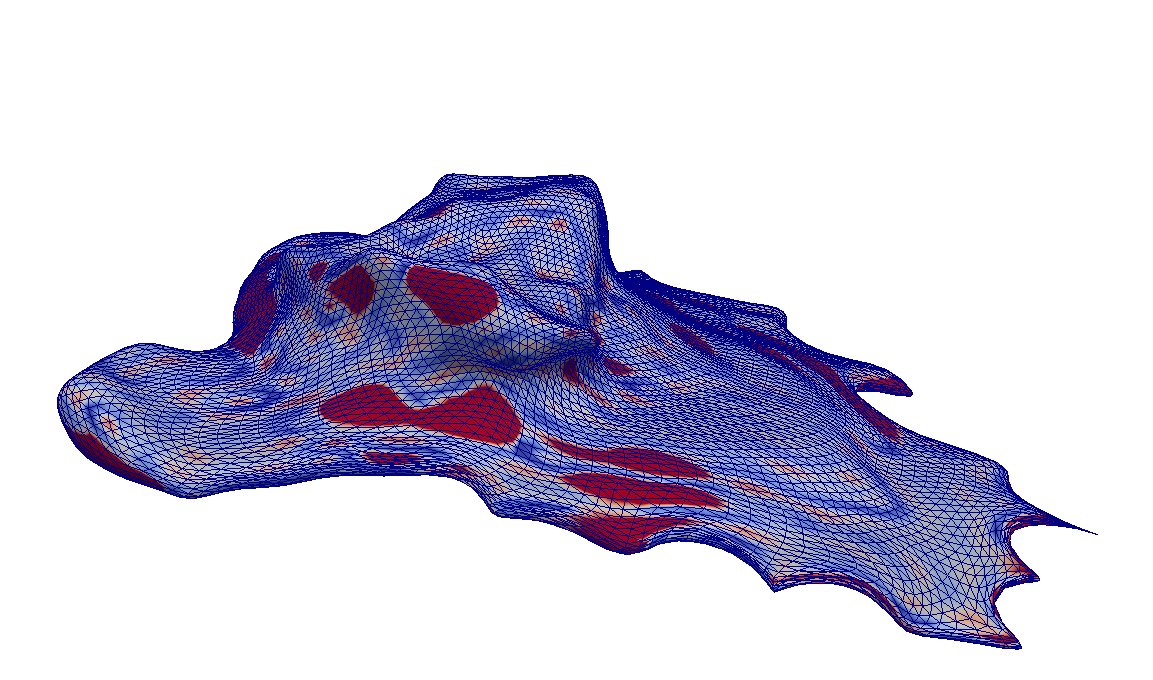}
\end{center}
\caption{Application of the scheme in Problem \ref{prob:discrete} to a cell surface obtained from image data. The color scheme is as in Figure \ref{fig:02_BSKT}. See Section \ref{sec:imgsim} for further details .}
\label{fig:07_CD}
\end{figure}

\end{document}